\newcommand{\rmv}[1]{}
\newcommand{\ou}{\mathcal O}
\newcommand{\rad}{\mathrm{rad}}
\newcommand{\mq}{m_{\alpha, q}(x)}
\newcommand{\G}{\mathbb G}
\newcommand{\GL}{\mathrm{GL}}
\newcommand{\F}{\mathbb{F}}
\newcommand{\ord}{\mathrm{ord}}
\begin{document}
\title{Factorization of a class of composed polynomials}


\author{Lucas Reis}


\institute{L. Reis \at Departamento de Matem\'{a}tica, Universidade 
Federal de Minas Gerais,  Belo Horizonte, Brazil \\ 
\email{lucasreismat@gmail.com}\\
\emph{Present address:} School of Mathematics and Statistics, 
Carleton University, Ottawa, Canada  }

\date{\today}

\maketitle

\begin{abstract}
In this paper, we provide the degree distribution of irreducible factors of the composed polynomial $f(L(x))$ over $\F_q$, where $f(x)\in \F_q[x]$ is irreducible and $L(x)\in \F_q[x]$ is a linearized polynomial. We further provide some applications of our main result, including lower bounds for the number of irreducible factors of $f(L(x))$, constructions of high degree irreducible polynomials and the explicit factorization of $f(x^q-x)$ under certain conditions on $f(x)$.
\keywords{factorization\and finite fields\and linearized polynomials\and $\F_q$-order}
\subclass{MSC 12E20 \and MSC 11T30}
\end{abstract}

\section{Introduction}
The factorization of polynomials as well as constructions of irreducible polynomials over finite fields play important roles in modern communications. Applications include algebraic coding theory \cite{B68}, cryptography \cite{L91} and computational number theory. Many methods on the construction of irreducible polynomials \cite{KK11} and the factorization of reducible polynomials \cite{BR18} consider compositions of the form $f(g(x))$, where $f$ is an irreducible polynomial. For a generic polynomial $g\in \F_q[x]$, there is no efficient method to determine the factorization of the composition $f(g(x))$ or even just obtain the degrees of its irreducible factors. In general \cite{C69}, the factorization of $f(g(x))$ is strongly related to the factorization of $g(x)-\alpha\in \F_{q^n}[x]$, where $\alpha\in \F_ {q^n}$ is any root of $f(x)$. If $g(x)$ has some additional field structure, the factorization of $g(x)-\alpha\in \F_{q^n}[x]$ and therefore $f(g(x))\in \F_q[x]$ may be treatable. For instance, if $g=x^d$ is a monomial, $g$ has a multiplicative structure: if $\gcd(d, q)=1$ and $\alpha_0$ is a root of $x^d-\alpha$, the roots of $x^d-\alpha$ are $\gamma\alpha_0$, where $\gamma$ varies through the roots of $x^d-1$. Butler \cite{B55} obtained the following result:

\begin{theorem}\label{thm:butler}
Let $f(x)\in \F_q[x]$ be an irreducible polynomial of degree $n$ such that any of its roots has multiplicative order $e$. Let $m$ be a positive integer such that $\gcd(m, q)=1$ and $m=m_1m_2$, where $\gcd(m_1, e)=1$ and each prime factor of $m_2$ divides $e$. Then
\begin{enumerate}[(i)]
\item each root of $f(x^m)$ has multiplicative order of the form $gm_2e$, where $g|m_1$;
\item if $g|m_1$, then $f(x^m)$ has exactly $\frac{nm_2\varphi(g)}{\ord_{gm_2e}q}$ irreducible factors of degree $\ord_{gm_2e}q$ with roots of multiplicative order $gm_2e$, where $\varphi$ is the Euler Phi Function and $\ord_{b}a$ is the order of $a$ modulo $b$.
\end{enumerate}
\end{theorem}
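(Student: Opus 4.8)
The plan is to translate everything into multiplicative orders inside a single large cyclic group and to reduce the counting of irreducible factors to a fiber-counting problem for the $m$-th power map. Fix a root $\alpha$ of $f$; its conjugates $\alpha,\alpha^q,\dots,\alpha^{q^{n-1}}$ are exactly the $n$ roots of $f$, all of multiplicative order $e$. Since $\gcd(m,q)=1$, each $x^m-\alpha^{q^j}$ is separable with $m$ distinct roots, and distinct conjugates give disjoint root sets; hence $f(x^m)$ has exactly $nm$ distinct roots, namely the $m$-th roots of the conjugates of $\alpha$. I would choose $N$ with $m_1m_2e\mid q^N-1$ so that all these roots lie in $G:=\F_{q^N}^{\times}$, a cyclic group in which the number of elements of order $d$ is $\varphi(d)$ for every $d\mid q^N-1$.

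For part (i), let $\beta$ be a root, so $\beta^m$ has order $e$. From $\ord(\beta^m)=\ord(\beta)/\gcd(\ord(\beta),m)=e$ I would write $\ord(\beta)=et$ with $t=\gcd(et,m)$ and then read off $t$ prime by prime using the hypotheses $\gcd(m_1,e)=1$ and $\rad(m_2)\mid e$: the primes dividing $m_2$ are forced to their full exponent, so the $m_2$-part of $t$ is exactly $m_2$, while the primes dividing $m_1$ are free, giving $t=gm_2$ with $g\mid m_1$ and hence $\ord(\beta)=gm_2e$. For the degrees, recall that an element of order $D$ lies in $\F_{q^s}$ iff $D\mid q^s-1$, so its degree over $\fq$ is $\ord_D q$; moreover Frobenius conjugates share the same order because $\gcd(q,D)=1$. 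Thus every irreducible factor whose roots have order $D=gm_2e$ has degree $\ord_{gm_2e}q$, and it suffices to count such roots.

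For part (ii), write $N_g$ for the number of roots of $f(x^m)$ of order $gm_2e$; since each corresponding irreducible factor contributes $\ord_{gm_2e}q$ such roots, the number of these factors is $N_g/\ord_{gm_2e}q$, and the theorem reduces to proving $N_g=nm_2\varphi(g)$. The heart of the argument is a uniformity statement for the $m$-th power map $\pi\colon G\to G$: I claim each element $y$ of order $e$ has exactly $m_2\varphi(g)$ preimages under $\pi$ of order precisely $gm_2e$. Granting this and summing over the $n$ roots of $f$ yields $N_g=nm_2\varphi(g)$, while the consistency check $\sum_{g\mid m_1}nm_2\varphi(g)=nm_2m_1=nm$ recovers the total root count. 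To prove the claim, fix one preimage $\beta_0$ of $y$; the full fiber is $\{\beta_0\gamma:\gamma\in\mu_m\}$, where $\mu_m=\mu_{m_1}\times\mu_{m_2}$ is the group of $m$-th roots of unity. Decomposing $G$ into Sylow components and separating the primes dividing $m_1$ (disjoint, by $\gcd(m_1,e)=1$, from those dividing $e$) from the primes dividing $e$, I would show that the components at primes dividing $m_2e$ are rigid, their orders multiplying to exactly $m_2e$ for every $\gamma$, whereas the $\mu_{m_1}$-component of $\beta_0\gamma$ ranges over all of $\mu_{m_1}$, hitting each value exactly $m_2$ times as $\gamma$ runs over $\mu_m$. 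Consequently $\ord(\beta_0\gamma)=gm_2e$ exactly when this $\mu_{m_1}$-component has order $g$, which occurs for $m_2\varphi(g)$ choices of $\gamma$.

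The main obstacle is precisely this uniformity/rigidity lemma: one must verify, prime by prime, that the hypotheses $\gcd(m_1,e)=1$ and $\rad(m_2)\mid e$ pin down the $m_2e$-part of every preimage while leaving the $m_1$-part equidistributed. Concretely, at a prime $p\mid m_2$ one uses that raising to the coprime factor $m_1$ is a bijection on the $p$-Sylow to deduce that the $p$-part of the order is always $v_p(m_2)+v_p(e)$, and the identity $\varphi(gm_2e)=m_2\varphi(g)\varphi(e)$ (valid since $g$ is coprime to $m_2e$ and $m_2e$ has the same radical as $e$) confirms $\varphi(gm_2e)/\varphi(e)=m_2\varphi(g)$. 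Once this bookkeeping is in place, parts (i) and (ii) follow at once.
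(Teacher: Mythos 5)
Your proposal is correct, but note that the paper itself gives no proof of this statement: Theorem~\ref{thm:butler} is quoted from Butler's 1955 paper as motivation, and what the paper actually proves is the additive analogue, Theorem~\ref{thm:main}, via Lemmas~\ref{lem:aux1} and~\ref{lem:aux2}. Measured against that parallel proof, your part (i) is essentially the same prime-by-prime (there: irreducible-factor-by-factor) divisibility analysis of $\ord(\beta)=e\cdot\gcd(\ord(\beta),m)$. Where you genuinely diverge is part (ii). The paper's route is indirect: Lemma~\ref{lem:aux2} only establishes an \emph{upper bound} on the number of preimages of prescribed order (by showing the fibers over the $\Phi_q(h)$ elements of a given order are equinumerous), and equality is then forced by summing the bounds over all divisors $G$ and comparing with the total degree $nq^{\deg(g)}$ --- a squeeze argument. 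You instead compute the fiber count \emph{exactly}: writing the fiber over $y$ as the coset $\beta_0\mu_m$ inside the cyclic group $\F_{q^N}^{\times}$ and splitting into the components supported on primes dividing $m_1$ versus the rest, you observe that part (i) already rigidifies the second component's order at $m_2e$, that the first component of $\beta_0$ lies in $\mu_{m_1}$ (since $\gcd(m_1,e)=1$ kills it under the $m$-th power map), and hence that the first component equidistributes over $\mu_{m_1}$ with multiplicity $m_2$, giving $m_2\varphi(g)$ preimages of order $gm_2e$ on the nose. Both arguments are sound; the squeeze is shorter and transfers verbatim to the linearized setting (where $\F_q[x]/\langle H\rangle^{*}$ is not cyclic, so your Sylow decomposition would need to be replaced by the CRT decomposition the paper implicitly uses), while your direct count is more self-contained and makes the uniformity of the fibers explicit rather than inferring it a posteriori. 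The one point worth making explicit in a write-up is the separability bookkeeping you mention in passing --- that $f(x^m)$ is squarefree and that Frobenius conjugates preserve order --- which is exactly the role Lemma~\ref{lem:aux1} plays in the paper.
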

Recently \cite{BR18}, the authors propose an efficient method to obtain the factorization of $f(x^m)$ under some special conditions on $m$ and $f(x)$. Another interesting class of polynomials is the \emph{linearized} polynomials $L=\sum_{i=0}^ma_ix^{q^i}\in \F_q[x]$. These polynomials induce $\F_q$-linear maps in any finite extension of $\F_q$. Using the additive structure of linearized polynomials, Long and Vaughan \cite{LV75} obtain an implicit description on the degree distribution of irreducible factors of $f(L(x))$ in the case when $L$ is linearized. However, the degrees of the irreducible factors are not explicitly given and the number of irreducible factors of each degree depends on the kernels of linear maps in extensions of $\F_q$; perhaps, this is due to the methods in Linear Algebra employed. The aim of this paper is to provide a far more explicit version of such result, in the sense that all the quantities depend only on elementary functions. Our approach relies on the $\F_q$-order of elements in finite fields, that corresponds to an additive analogue of the multiplicative order. In this correspondence, polynomial analogues of many number theoretic functions arise. In particular, our description yields a linearized analogue of Theorem~\ref{thm:butler}. A more detailed account in our main result provides a lower bound for the number $N$ of irreducible factors of $f(L(x))$ over $\F_q$ and, in particular, we obtain a characterization of the irreducible polynomials of the form $f(L(x))$: the irreducible polynomials arise exactly when $N=1$. We explore special cases when $N=2$ and, in particular, we obtain a method to produce high degree irreducible polynomials from primitive polynomials. We further present an efficient method to obtain the explicit factorization of $f(L(x))\in \F_q[x]$ in the case when $L=x^q-x$ and $f$ is an irreducible polynomial of degree $n$ and trace zero, where $n$ is relatively prime with $q$.

\section{Preliminaries}
In this section, we provide a background material that is used along the paper. Throughout this paper, we fix $\F_q$ the finite field with $q$ elements, where $q$ is a power of a prime $p$. We start with some basic notations: $\overline{\F}_q$ is the algebraic closure of $\F_q$ and $\ord(\alpha):=\min \{d>0\,|\, \alpha^d=1\}$ is the multiplicative order of $\alpha\in \overline{\F}_q^*$. For $\alpha\in \overline{\F}_q$, $m_{\alpha}(x)\in \F_q[x]$ is the minimal polynomial of $\alpha$ over $\F_q$ and $\deg(\alpha):=\deg(m_{\alpha})$ is the degree of $\alpha$ over $q$. It is known that, if $d=\deg(\alpha)$, then $\F_{q^d}$ is the smallest extension of $\F_q$ that contains $\alpha$ and $m_{\alpha}(x)=\prod_{i=0}^{d-1}(x-\alpha^{q^i})$: the elements $\alpha^{q^i}$ are the \emph{conjugates} of $\alpha$. In the following theorem, we summarize some basic facts on the multiplicative order of elements in finite fields.
 
\begin{theorem}\label{thm:mult}
Let $\alpha\in \overline{\F}_q^*$ be an element of multiplicative order $\ord(\alpha)=e$. The following hold:

\begin{enumerate}[(i)]
\item $\deg(\alpha)=\ord_eq$;
\item if $\beta=\alpha^s$, then $\ord(\beta)=\frac{e}{\gcd(e, s)}$.
\end{enumerate}
In addition, for any positive integer $E$ relatively prime with $q$, there exist $\varphi(E)$ elements $\alpha\in \overline{\F}_q^*$ such that $\ord(\alpha)=E$.
\end{theorem}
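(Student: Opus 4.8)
The plan is to treat the two numbered items and the final counting statement in turn, in each case reducing to two standard inputs: the cyclicity of the multiplicative group of a finite field, and the Frobenius characterization of subfields of $\overline{\F}_q$. For (i), I would use that $\deg(\alpha)=d$ is by definition the least positive integer with $\alpha\in \F_{q^d}$, together with the fact that membership is detected by the Frobenius: $\alpha\in \F_{q^d}$ if and only if $\alpha^{q^d}=\alpha$. Since $\alpha\neq 0$, this is equivalent to $\alpha^{q^d-1}=1$, which by the definition of multiplicative order holds precisely when $e\mid q^d-1$, i.e. $q^d\equiv 1 \pmod e$. Note that $e\mid q^d-1$ forces $\gcd(e,q)=1$, so $q$ is a unit modulo $e$ and $\ord_eq$ is well defined; the least $d$ with $q^d\equiv 1\pmod e$ is then exactly $\ord_eq$, giving $\deg(\alpha)=\ord_eq$.

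For (ii), I would argue purely inside the cyclic group generated by $\alpha$. Writing $g=\gcd(e,s)$, the relation $(\alpha^s)^k=1$ holds if and only if $e\mid sk$, which is equivalent to $\frac{e}{g}\mid \frac{s}{g}k$; since $\frac{e}{g}$ and $\frac{s}{g}$ are coprime, this in turn is equivalent to $\frac{e}{g}\mid k$. Hence the least positive such $k$ is $\frac{e}{g}$, that is, $\ord(\beta)=\frac{e}{\gcd(e,s)}$.

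For the final statement, fix $E$ with $\gcd(E,q)=1$ and set $d=\ord_Eq$, so that $E\mid q^d-1$. Every element of order $E$ is an $E$-th root of unity, hence a root of $x^E-1$, and all such roots lie in $\F_{q^d}$ because $E\mid q^d-1$; equivalently, by (i) any element of order $E$ has degree $\ord_Eq=d$ and so already sits in $\F_{q^d}$. It therefore suffices to count elements of order $E$ in $\F_{q^d}^*$. This group is cyclic of order $q^d-1$, and a cyclic group of order $N$ contains exactly $\varphi(m)$ elements of order $m$ for each divisor $m\mid N$; applying this with $m=E$ and $N=q^d-1$ yields exactly $\varphi(E)$ elements of order $E$.

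The only genuinely non-elementary ingredients are the cyclicity of $\F_{q^d}^*$ and the Frobenius description of $\F_{q^d}$ inside $\overline{\F}_q$, with everything else reducing to short divisibility computations. I expect the single point requiring care to be the claim in the last part that every element of order $E$ lies in the one field $\F_{q^d}$ with $d=\ord_Eq$ — i.e. that $\ord_Eq$ is simultaneously the degree of each such element and the level at which all $E$-th roots of unity first appear — which is precisely what part (i) supplies, so the three claims are best proved in the stated order.
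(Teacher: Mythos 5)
Your proof is correct; the paper itself states Theorem~\ref{thm:mult} without proof, as a summary of standard facts (see, e.g., Lidl--Niederreiter), and your argument via the Frobenius characterization of $\F_{q^d}$ and the cyclicity of $\F_{q^d}^*$ is exactly the standard one, with the well-definedness of $\ord_eq$ and the containment of all order-$E$ elements in a single field $\F_{q^d}$ correctly attended to.
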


\subsection{Linearized polynomials and the $\F_q$-order}
For a polynomial $g\in \F_q[x]$ with $g(x)=\sum_{i=0}^{m-1}a_ix^i$, the polynomial $$L_g(x):=\sum_{i=0}^{m-1}a_ix^{q^i},$$ is the \emph{$q$-associate} of $g$. Of course, $L_g$ is always a linearized polynomial; conversely, any linearized polynomial $L\in \F_q[x]$ is the $q$-associate of some polynomial in $\F_q[x]$. In the following lemma, we show that the $q$-associates have interesting arithmetic properties.
\begin{lemma}\label{lem:linear}
Let $g, h\in \F_q[x]$. The following hold:
\begin{enumerate}[(i)]
\item $L_g(x)+L_h(x)=L_{g+h}(x)$;
\item $L_g(L_h(x))=L_{gh}(x)$.
\end{enumerate}
\end{lemma}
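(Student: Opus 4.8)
The plan is to verify both identities by a direct computation on coefficients, exploiting the fact that raising to a $q$-th power is a ring homomorphism of $\overline{\F}_q$ that fixes $\F_q$ pointwise. Write $g(x)=\sum_i a_i x^i$ and $h(x)=\sum_j b_j x^j$ with $a_i, b_j\in\F_q$, so that by definition $L_g(x)=\sum_i a_i x^{q^i}$ and $L_h(x)=\sum_j b_j x^{q^j}$.

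For part (i) I would simply observe that $g+h=\sum_i(a_i+b_i)x^i$, so matching coefficient by coefficient gives $L_{g+h}(x)=\sum_i(a_i+b_i)x^{q^i}=\sum_i a_i x^{q^i}+\sum_i b_i x^{q^i}=L_g(x)+L_h(x)$. This part uses nothing beyond the linearity of the assignment $g\mapsto L_g$ and requires no special field structure.

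For part (ii) the key step is to substitute $y=L_h(x)$ into $L_g(y)=\sum_i a_i y^{q^i}$ and to use that, since $q$ is a power of the characteristic $p$, the map $z\mapsto z^{q^i}$ is both additive and multiplicative. This yields
$$L_g(L_h(x))=\sum_i a_i\Bigl(\sum_j b_j x^{q^j}\Bigr)^{q^i}=\sum_i a_i\sum_j b_j^{q^i}x^{q^{i+j}}.$$
The crucial observation — and essentially the only place where the base field enters — is that each $b_j$ lies in $\F_q$ and is therefore fixed by the Frobenius, i.e. $b_j^{q^i}=b_j$ for every $i\ge 0$. After this simplification the double sum collapses to $\sum_{i,j}a_i b_j x^{q^{i+j}}$.

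Finally I would compare this with $L_{gh}(x)$. Writing $gh=\sum_k c_k x^k$ with $c_k=\sum_{i+j=k}a_i b_j$, the definition of the $q$-associate gives $L_{gh}(x)=\sum_k c_k x^{q^k}=\sum_{i,j}a_i b_j x^{q^{i+j}}$, which matches the expression obtained above. The main (and practically only) obstacle is bookkeeping: one must be confident that the Frobenius $z\mapsto z^{q^i}$ distributes across the finite sum defining $L_h$ and leaves the $\F_q$-coefficients untouched. Once this is granted, the identity is just the statement that the exponent $q^{i+j}$ appearing in the $q$-associate of $gh$ tracks the exponent $i+j$ in the ordinary product, so that the convolution of coefficients is faithfully preserved under the correspondence $g\mapsto L_g$.
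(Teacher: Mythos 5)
Your proof is correct: the coefficient-by-coefficient computation, using that Frobenius is additive in characteristic $p$ and fixes the $\F_q$-coefficients $b_j$, is exactly the standard argument. The paper itself does not spell out a proof but simply cites Section 3.4 of Lidl--Niederreiter, where the same computation appears, so there is nothing methodologically different to compare.
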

\begin{proof}For the proof of this result, see Section 3.4 of \cite{LN}.\end{proof}
For an element $\alpha\in \overline{\F}_q$, we set $I_{\alpha}=\{g\in \F_q[x]\,|\, L_g(\alpha)=0\}$. From Lemma~\ref{lem:linear}, $I_{\alpha}$ is an ideal of $\F_q[x]$ and, if $\alpha\in \F_{q^d}$, $L_{x^d-1}(\alpha)=\alpha^{q^d}-\alpha=0$, hence $x^d-1\in I_{\alpha}$. In particular, $I_{\alpha}$ is a nontrivial ideal of $\F_q[x]$ and so $I_{\alpha}$ is generated by a polynomial $m_{\alpha, q}(x)$, which we can suppose to be monic. Therefore, for any $\alpha\in \overline{\F}_q$ and any $g\in \F_q[x]$, we have that $L_g(\alpha)=0$ if and only if $g$ is divisible by $\mq$.

\begin{definition}
For an element $\alpha\in \overline{\F}_q$, the polynomial $\mq$ is the \emph{$\F_q$-order} of $\alpha$ over $\F_q$.
\end{definition}

\noindent For instance, the element $0$ has $\F_q$-order $m_{0, q}(x)=1$ and any element $c\in \F_q^*$ has $\F_q$-order $m_{c, q}(x)=x-1$. In general, for $\alpha\in \F_{q^d}$, $m_{\alpha, q}(x)$ divides $x^d-1$; in particular, $\gcd(m_{\alpha, q}(x), x)=1$. It is straightforward to check that the $\F_q$-order of an element $\alpha$ coincides with the $\F_q$-order of any of its conjugates $\alpha^{q^i}$. The $\F_q$-order of an element is the \emph{additive} analogue of the multiplicative order in finite fields: in this analogy, Theorem~\ref{thm:mult} can be translated to $\F_q$-order with a suitable change of functions. 

\begin{definition}
Let $f, g\in \F_q[x]$.
\begin{enumerate}[(i)]
\item the \emph{norm} of $f$ is $N(f)=q^{d}$, where $d=\deg(f)$;
\item the \emph{Euler Phi function} for polynomials over $\F_q$ is $$\Phi_q(f)=\left |\left(\frac{\F_q[x]}{\langle f\rangle}\right)^{*}\right |,$$ where $\langle f\rangle$ is the ideal generated by $f$ in $\F_q[x]$;
\item if $\gcd(f, g)=1$, $\ou(f, g):=\min\{k>0\,|\, f^k\equiv 1\pmod g\}$ is the order of $f$ modulo $g$.
\end{enumerate}
\end{definition}

The function $\Phi_q$ is multiplicative (Chinese Remainder Theorem) and $$\Phi_q(g^s)=q^{(s-1)d}(q^d-1)=N(g)^{s-1}(N(g)-1),$$ if $g$ is an irreducible polynomial of degree $d$ and $s$ is a positive integer: one may compare with $\varphi(r^s)=r^{s-1}(r-1)$ if $r$ is a prime number. It is straightforward to check that $\ou(f, g)$ divides $\Phi_q(g)$. In duality to the multiplicative order in finite fields, we obtain the following additive version of Theorem~\ref{thm:mult}.

\begin{theorem}\label{thm:add}
Let $\alpha\in \overline{\F}_q$ be an element of $\F_q$-order $\mq=h$. The following hold:
\begin{enumerate}[(i)]
\item $\deg(\alpha)=\ou(x, h)$;
\item if $\beta=L_g(\alpha)$, then $\beta$ has $\F_q$-order $m_{\beta, q}(x)=\frac{h}{\gcd(h, g)}$.
\end{enumerate}
In addition, for any polynomial $H$ relatively prime with $x$, there exist $\Phi_q(H)$ elements $\alpha\in \overline{\F}_q$ such that $\mq=H$.
\end{theorem}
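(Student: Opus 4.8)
The plan is to derive all three assertions from the single structural fact recorded before the statement, namely that $L_g(\alpha)=0$ holds exactly when $\mq$ divides $g$, together with Lemma~\ref{lem:linear}. For part (i), I would observe that $\deg(\alpha)$ is the least $d\ge 1$ with $\alpha\in\F_{q^d}$, equivalently with $\alpha^{q^d}=\alpha$. Since $\alpha^{q^d}-\alpha=L_{x^d-1}(\alpha)$, this occurs precisely when $h=\mq$ divides $x^d-1$, i.e. $x^d\equiv 1\pmod h$. Because $\gcd(x,h)=1$, the order $\ou(x,h)$ is defined and is by definition the least such $d$; hence $\deg(\alpha)=\ou(x,h)$.

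For part (ii), write $\beta=L_g(\alpha)$ and use $L_f(\beta)=L_f(L_g(\alpha))=L_{fg}(\alpha)$ to see that $L_f(\beta)=0$ iff $h\mid fg$. Setting $d=\gcd(h,g)$ and $h=d h'$, $g=d g'$ with $\gcd(h',g')=1$, the condition $h\mid fg$ reduces to $h'\mid f$, so $I_\beta=\langle h'\rangle$ and therefore $m_{\beta,q}(x)=h/\gcd(h,g)$, which is monic since $h$ and $\gcd(h,g)$ are.

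For the counting statement, fix $H$ with $\gcd(H,x)=1$ and set $V_H=\{\alpha\in\overline{\F}_q:L_H(\alpha)=0\}$. This is the kernel of the $\F_q$-linear map $L_H$, and since the formal derivative of $L_H$ is the constant $H(0)\neq 0$ (every higher term $x^{q^i}$ with $i\ge 1$ differentiates to zero in characteristic $p$), the polynomial $L_H$ is separable of degree $N(H)=q^{\deg H}$; hence $|V_H|=N(H)$. By the divisibility characterization, $\alpha\in V_H$ iff $\mq\mid H$, so $V_H$ splits according to the $\F_q$-order of its elements: writing $\psi(D)$ for the number of $\alpha$ with $\mq=D$, we obtain $\sum_{D\mid H}\psi(D)=N(H)$ for every $H$ coprime to $x$.

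It remains to identify $\psi$ with $\Phi_q$. I would establish the companion identity $\sum_{D\mid H}\Phi_q(D)=N(H)$ from the multiplicativity of both sides, verifying it on prime powers $P^s$ via $\Phi_q(P^s)=N(P)^{s-1}(N(P)-1)$ and a telescoping sum. Since $\psi$ and $\Phi_q$ then share the same summatory function over divisors for all $H$ coprime to $x$, Möbius inversion on the divisor lattice—equivalently, induction on $\deg H$ starting from $\psi(1)=1=\Phi_q(1)$ (the single element $\alpha=0$)—forces $\psi(H)=\Phi_q(H)$, as claimed. The main obstacle is this counting part: one must make sure that the separability of $L_H$, which is the only place the hypothesis $\gcd(H,x)=1$ is genuinely used, really produces $N(H)$ \emph{distinct} roots, and that the two summatory identities agree across the full divisor lattice so that the inversion is legitimate.
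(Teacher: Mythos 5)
Your proof is correct. For items (i) and (ii) you take the same route as the paper: (i) is essentially verbatim the paper's argument, and for (ii) the paper only says the claim ``follows by direct calculations'' --- the computation you supply (namely $L_f(\beta)=L_{fg}(\alpha)$, hence $I_\beta=\{f : h\mid fg\}=\langle h/\gcd(h,g)\rangle$ after writing $h=dh'$, $g=dg'$ with $d=\gcd(h,g)$ and $\gcd(h',g')=1$) is exactly the intended calculation. The genuine divergence is in the counting statement: the paper does not prove it, deferring to Theorem 11 of Ore, whereas you give a self-contained argument. That argument is sound: separability of $L_H$ (its derivative is the nonzero constant $H(0)$, which is where $\gcd(H,x)=1$ enters) gives $|\ker L_H|=N(H)$ distinct roots; the partition of $\ker L_H$ by $\F_q$-order gives $\sum_{D\mid H}\psi(D)=N(H)$; the companion identity $\sum_{D\mid H}\Phi_q(D)=N(H)$ follows from multiplicativity of both sides together with the telescoping sum $1+\sum_{i=1}^{s}N(P)^{i-1}(N(P)-1)=N(P)^s$ on prime powers; and induction on $\deg H$ (or M\"obius inversion on the divisor lattice) then forces $\psi=\Phi_q$. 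It is worth noting that this is essentially the computation the paper performs later in Lemma~\ref{lem:count}, except that there the root count is combined with the already-assumed counting statement of Theorem~\ref{thm:add} to deduce $\sum_{D\mid g}\Phi_q(D)=N(g)$, while you prove the $\Phi_q$-summatory identity independently on prime powers and thereby obtain the counting statement (and Lemma~\ref{lem:count} as a byproduct) without the external reference. What your route buys is self-containedness; the only cost is the routine prime-power verification, and there is no circularity since you never invoke Lemma~\ref{lem:count}.
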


\begin{proof}
\begin{enumerate}[(i)]
\item Observe that $\deg(\alpha)$ is the least positive integer $k$ such that $\alpha\in \F_{q^k}$. Also, for any positive integer $d$, we have that $\alpha\in \F_{q^d}$ if and only if $$L_{x^d-1}(\alpha)=\alpha^{q^d}-\alpha=0,$$ that is, $\mq=h$ divides $x^d-1$. Now, the result follows from definition of $\ou(x, h)$.

\item This item follows by direct calculations.
\end{enumerate}
For the proof of the last statement, see Theorem 11 of~\cite{O34}.
\end{proof}

It is well known that, for any positive integer $n$, $\sum_{d|n}\varphi(d)=n$. As follows, we also have the polynomial version of this result.

\begin{lemma}\label{lem:count}
For any polynomial $g\in \F_q[x]$ of degree $d$, the following holds:
\begin{equation}\label{eq:sum-phi}
\sum_{h|g}\Phi_q(h)=q^{d}=N(g),
\end{equation}
where $h$ is monic and polynomial division is over $\F_q$.
\end{lemma}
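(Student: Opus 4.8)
The plan is to mimic the classical proof that $\sum_{d\mid n}\varphi(d)=n$ by exploiting that both sides of \eqref{eq:sum-phi}, viewed as functions of $g$, are multiplicative; one then reduces to the case of a prime power and evaluates a telescoping sum. First I would record the two multiplicativity facts. Since $\deg$ is additive, $N(g_1g_2)=N(g_1)N(g_2)$, so the right-hand side is (completely) multiplicative. The excerpt already states that $\Phi_q$ is multiplicative via the Chinese Remainder Theorem. Define $F(g)=\sum_{h\mid g}\Phi_q(h)$, the sum running over monic divisors. To see that $F$ is multiplicative, I would use that if $\gcd(g_1,g_2)=1$ then every monic divisor $h$ of $g_1g_2$ factors uniquely as $h=h_1h_2$ with $h_i\mid g_i$ monic and $\gcd(h_1,h_2)=1$; applying the multiplicativity of $\Phi_q$ to each term and rearranging the resulting double sum gives $F(g_1g_2)=F(g_1)F(g_2)$.

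This is the step where I expect the main obstacle to lie, not because it is deep but because it is the only place requiring care: it rests on unique factorization in $\F_q[x]$ together with the coprimality bookkeeping that makes the divisor set of $g_1g_2$ a product of the divisor sets of $g_1$ and $g_2$. Everything else is a routine computation.

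Having reduced to $g=P^s$ with $P$ monic irreducible of degree $m$, I would list the monic divisors $1,P,\dots,P^s$ and use the formula $\Phi_q(P^j)=N(P)^{j-1}(N(P)-1)$ for $j\ge 1$ (given in the excerpt) together with $\Phi_q(1)=1=N(1)$. Writing $Q=N(P)=q^m$, the sum becomes
\[
F(P^s)=1+(Q-1)\sum_{j=1}^{s}Q^{j-1}=1+(Q-1)\cdot\frac{Q^s-1}{Q-1}=Q^s=N(P^s),
\]
a straightforward geometric-series telescoping. By the multiplicativity of both $F$ and $N$, the equality $F(g)=N(g)$ on prime powers propagates to all monic $g$, which proves \eqref{eq:sum-phi}.

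I would also note a conceptual sanity check that explains the identity in the generic case $\gcd(g,x)=1$: there $L_g$ is separable, since its derivative is the nonzero constant term of $L_g$, so the kernel of $L_g$ is an $\F_q$-space with exactly $N(g)$ elements. Partitioning these $N(g)$ roots according to their $\F_q$-order $h$ — which ranges over the monic divisors of $g$ by the defining property of $\mq$ — and invoking the last clause of Theorem~\ref{thm:add}, that there are exactly $\Phi_q(h)$ elements of $\F_q$-order $h$, recovers \eqref{eq:sum-phi} directly. The catch is that this counting argument covers only those $g$ coprime to $x$, because every element of $\overline{\F}_q$ has $\F_q$-order coprime to $x$; it is the multiplicative argument above that makes the statement valid for all $g$, including those divisible by $x$.
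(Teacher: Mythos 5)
Your proof is correct, but it takes a genuinely different route from the paper's. The paper first strips off the $x$-part of $g$ by a direct computation with the multiplicativity of $\Phi_q$, and then, for $\gcd(g,x)=1$, counts the $q^d$ distinct roots of the separable linearized polynomial $L_g$ in $\overline{\F}_q$ and partitions them by $\F_q$-order, invoking the last clause of Theorem~\ref{thm:add} (Ore's count of exactly $\Phi_q(H)$ elements of $\F_q$-order $H$). In other words, the paper's main engine is exactly the argument you relegate to a ``sanity check,'' and your observation that this argument only covers $g$ coprime to $x$ is precisely why the paper needs its preliminary reduction step. Your main argument --- multiplicativity of $F(g)=\sum_{h\mid g}\Phi_q(h)$ via unique factorization of divisors, plus the telescoping geometric series on prime powers --- is the direct polynomial analogue of the classical proof of $\sum_{d\mid n}\varphi(d)=n$. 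What it buys you is uniformity and logical independence: the irreducible $P=x$ is treated on the same footing as every other prime (since $\Phi_q(x^s)=q^{s-1}(q-1)$ fits the stated prime-power formula), and you never need the nontrivial input that there exist exactly $\Phi_q(H)$ elements of each $\F_q$-order $H$. What the paper's route buys is brevity once that count is taken as known, and a conceptual link between the identity and the kernel of $L_g$ that is in the spirit of the rest of the paper; indeed, combining your identity with the root count of $L_g$ would give an alternative inductive proof of Ore's count itself.
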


\begin{proof}
We observe that if $g=x^sg_0$ with $s\ge 1$ and $\gcd(g_0, x)=1$, then $$\sum_{h|g}\Phi_q(h)=\sum_{i=0}^{s}\sum_{h|g_0}\Phi_q(hx^i)=\sum_{h|g_0}\left(\Phi_q(h)\sum_{i=1}^{s}q^{i-1}(q-1)\Phi_q(h)\right)=q^s\sum_{h|g_0}\Phi_q(h),$$ where $q^s=N(x^s)$. In particular, it is sufficient to prove Eq.~\eqref{eq:sum-phi} for the case $\gcd(g, x)=1$: in this case, the formal derivative of $L_g(x)$ is a nonzero constant and so the equation $L_g(x)=0$ has exactly $\deg(L_g)=q^d$ distinct solutions in $\overline{\F}_q$. It is straightforward to see that, for $\alpha\in \overline{\F}_q$, $L_g(\alpha)=0$ if and only if $\mq$ divides $g$ and the result follows from Theorem~\ref{thm:add}.
\end{proof}

\section{The additive analogue of Theorem~\ref{thm:butler}}
So far we have provided a duality between the multiplicative order and the $\F_q$-order in finite fields. At this point, it is clear what are the objects in this additive-multiplicative correspondence. We summarize them as follows.
\begin{center}
\begin{tabular}{ l c r }
  $\mathbb Z$ & $\leftrightarrow$ & $\F_q[x]$ \\
  $q$ & $\leftrightarrow$ & $x$\\
   $|n|=n$ & $\leftrightarrow$ & $N(f)=q^{\deg(f)}$ \\
  $\ord(\alpha)$ & $\leftrightarrow$ & $\mq$\\
  $\varphi(n)$ & $\leftrightarrow$ & $\Phi_q(f)$ \\
  $\ord_ba$ & $\leftrightarrow$ & $\ou(f, g)$\\
  primes & $\leftrightarrow$ & monic irreducible polynomials\\
  monomials & $\leftrightarrow$ & linearized polynomials
\end{tabular}
\end{center}

\noindent Motivated by this correspondence, we present the linearized version of Theorem~\ref{thm:butler}.

\begin{theorem}\label{thm:main}
Let $f(x)\in \F_q[x]$ be an irreducible polynomial of degree $n$ such that any of its roots has $\F_q$-order $h$. Let $g\in \F_q[x]$ be a monic polynomial such that $\gcd(g, x)=1$ and write $g=g_1g_2$, where $\gcd(g_1, h)=1$ and each irreducible factor of $g_2$ divides $h$. If $L_g$ denotes the $q$-associate of $g$ and $\deg(g_2)=m$, then
\begin{enumerate}[(i)]
\item each root of $f(L_g(x))$ has $\F_q$-order of the form $Gg_2h$, where $G$ divides $g_1$;
\item if $G$ divides $g_1$, $f(L_g(x))$ has exactly $$\frac{nN(g_2)\Phi_q(G)}{\ou(x, Gg_2h)}=\frac{nq^m\Phi_q(G)}{\ou(x, Gg_2h)},$$ irreducible factors of degree $\ou(x, Gg_2h)$ with roots of $\F_q$-order $Gg_2h$.
\end{enumerate}
\end{theorem}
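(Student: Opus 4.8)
The plan is to exploit the $\fq[x]$-module structure on $\overline{\F}_q$ in which a polynomial $g$ acts by $g\cdot\gamma = L_g(\gamma)$; in this language the $\F_q$-order $m_{\gamma,q}(x)$ is precisely the monic generator of the annihilator of $\gamma$, and the roots of $f(L_g(x))$ are exactly the elements $\beta$ with $g\cdot\beta$ a root of $f$. Since $L_g$ commutes with the Frobenius, the root set is Frobenius-stable and each irreducible factor is a single Frobenius orbit, so everything reduces to counting roots with prescribed $\F_q$-order.

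For part (i), fix a root $\beta$ of $f(L_g(x))$; then $L_g(\beta)$ is a conjugate of a root of $f$ and hence has $\F_q$-order $h$. Writing $H=m_{\beta,q}(x)$ and applying Theorem~\ref{thm:add}(ii) to $L_g(\beta)$ gives the single governing identity $H/\gcd(H,g)=h$. I would first record that the hypotheses force $\gcd(g_1,g_2)=1$ (every irreducible factor of $g_2$ divides $h$, which is coprime to $g_1$), and then solve this divisibility prime by prime: at primes dividing $g_1$ the factor $G\mid g_1$ is free, while at the primes dividing $g_2$ (all of which divide $h$) the equation forces the full power of $g_2$ to appear. This yields $H=Gg_2h$ with $G\mid g_1$.

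For part (ii) I would first note that $\gcd(g,x)=1$ makes $L_g$ separable, so $f(L_g(x))$ is separable and its roots are distinct; by Theorem~\ref{thm:add}(i) every root of $\F_q$-order $Gg_2h$ has degree $\ou(x,Gg_2h)$, and since conjugation preserves the $\F_q$-order, the number of irreducible factors with roots of that order equals (number of such roots) divided by $\ou(x,Gg_2h)$. The task is therefore to show that $f(L_g(x))$ has exactly $nq^m\Phi_q(G)$ roots of $\F_q$-order $Gg_2h$. Using that Frobenius permutes the $n$ roots of $f$ transitively and commutes with $L_g$, the fibres of $L_g$ over these $n$ roots all carry the same number of $\beta$'s of order $Gg_2h$, so it suffices to fix one root $\alpha$, count the $\beta$ with $L_g(\beta)=\alpha$ and $m_{\beta,q}(x)=Gg_2h$, and multiply by $n$.

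The heart of the argument, and the step I expect to be the main obstacle, is this local count. I would carry it out through the primary decomposition of $\overline{\F}_q$ as an $\fq[x]$-module, treating each irreducible $\pi$ separately. At primes dividing $g_1$ the component of $\alpha$ vanishes and $\beta^{(\pi)}$ ranges over torsion elements of prescribed order, contributing $\Phi_q(G)$ in total; at primes dividing $h$ but not $g$, the element $g$ acts invertibly and the component is uniquely determined; and at the primes dividing $g_2$ — where $g$ is non-invertible and the order must grow by exactly the $g_2$-part — I would combine surjectivity of the $g$-action (algebraic closedness), the kernel-size count $N(\cdot)$ coming from separability, and the order-drop formula of Theorem~\ref{thm:add}(ii) to show that \emph{every} solution of $g\cdot\beta^{(\pi)}=\alpha^{(\pi)}$ automatically attains the right order, contributing the factor $N(g_2)=q^m$. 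Assembling the local factors by the Chinese Remainder Theorem gives $q^m\Phi_q(G)$ per root of $f$, hence $nq^m\Phi_q(G)$ in total; a final consistency check against $\sum_{G\mid g_1}\Phi_q(G)=N(g_1)$ from Lemma~\ref{lem:count} confirms that, summed over $G\mid g_1$, these exhaust all $nN(g)$ roots.
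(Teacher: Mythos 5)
Your argument is correct, and while part (i) is essentially the paper's computation (the identity $H=h\cdot\gcd(H,g)$ resolved at each irreducible $\pi$, rather than via the factorization $H=hh_1h_2$ with $h_1\mid g_1$, $h_2\mid g_2$), your part (ii) takes a genuinely different route. The paper never computes the fibre $\{\beta : L_g(\beta)=\alpha,\ m_{\beta,q}=Gg_2h\}$ exactly: it only proves the \emph{upper bound} $n(G,f)\le n\,N(g_2)\Phi_q(G)$ (via Lemma~\ref{lem:aux2}, which shows all fibres over elements of $\F_q$-order $h$ are equinumerous by translating with the maps $L_{A_i}$), and then forces equality by summing over all $G\mid g_1$ and comparing with the total root count $nq^{\deg g}$ using $\sum_{G\mid g_1}\Phi_q(G)=N(g_1)$. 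You instead compute the fibre cardinality directly through the primary decomposition of $\overline{\F}_q$ as a torsion $\F_q[x]$-module: $\Phi_q(G)$ from the $g_1$-primes (kernel elements of prescribed order), a unique component at primes of $h$ prime to $g$ (invertibility of $g$ there), and $N(g_2)=q^m$ from the $g_2$-primes, where the order-drop formula of Theorem~\ref{thm:add}(ii) forces \emph{every} preimage to have the correct order --- a point worth writing out, since it is exactly where the hypothesis that each irreducible factor of $g_2$ divides $h$ (so $a=v_\pi(h)\ge 1$ and hence $c=a+v_\pi(g_2)$ is forced) enters. Your approach buys an exact local count and makes the consistency check $\sum_G \Phi_q(G)=N(g_1)$ redundant; the paper's squeeze argument buys brevity and avoids the module-theoretic setup, at the cost of being less transparent about why each fibre has precisely that size. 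The only points you should make explicit when writing this up are that $L_g$ preserves primary components (so surjectivity of $L_g$ on $\overline{\F}_q$ descends to each $M_\pi$) and that the kernel of $L_g$ on the $\pi$-primary part has exactly $N(\pi^{v_\pi(g)})$ elements by Lemma~\ref{lem:count}; both are routine.
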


\begin{remark}
The condition $\gcd(g, x)=1$ in Theorem~\ref{thm:main} is not restrictive: if $g=x^sg_0$ with $\gcd(x, g_0)=1$, then $$f(L_g(x))=f(L_{g_0}(x))^{q^s},$$
and Theorem~\ref{thm:main} can be applied for the composition $f(L_{g_0}(x))$.
\end{remark}

\begin{example}
We consider $q=2$, $f=x^2+x+1$ and $g=x^4+x^2+x+1=(x+1)(x^3+x^2+1)$. In this case, the roots of $f$ have $\F_q$-order equals $h=x^2+1$ and then, in the notation of Theorem~\ref{thm:main}, $g=g_1g_2$, where $g_1=x^3+x^2+1$ and $g_2=x+1$. Also, $n=\deg(f)=2$ and $m=\deg(g_2)=1$,  $\ou(x, g_2h)=\ou(x, (x+1)^3)=4$ and $\ou(x, g_1g_2h)=\ou(x, (x^3+x^2+1)(x+1)^3)=28$. Also, $\Phi_q(1)=1, $ and $\Phi_q(g_1)=7$. According to Theorem~\ref{thm:main}, the polynomial
$$f(L_g(x))=f(x^{16}+x^4+x^2+x)=x^{32} + x^{16} + x^8 + x + 1,$$
has exactly $\frac{2\cdot 2^1}{4}=1$ irreducible factor of degree $4$ (with roots of $\F_q$-order $(x+1)^3$) and $\frac{2\cdot 2^1\cdot 7}{28}=1$ irreducible factor of degree $28$ (with roots of $\F_q$-order $(x^3+x^2+1)(x+1)^3$). If we compute the factorization of $f(L_g(x))$ over $\F_2$ we obtain $$x^{32} + x^{16} + x^8 + x + 1=F_1F_2,$$ where $F_1=x^4 + x + 1$ and $F_2=x^{28} + x^{25} + x^{24} + x^{22} + x^{20} + x^{19} + x^{18} + x^{17} + x^{13} + x^{12} + x^{10} + x^{8} + x^{7} + x^{6} + x^5 + x^4 + 1$.
\end{example}

Before we proceed in the proof of Theorem~\ref{thm:main}, we need some technical lemmas.

\begin{lemma}\label{lem:aux1}
For any $\alpha, \beta\in \overline{\F}_q$ and any polynomial $h\in \F_q[x]$ not divisible by $x$, the following hold:
\begin{enumerate}[(i)]
\item the polynomials $L_h(x)-\alpha$ and $L_h(x)-\beta$ are relatively prime;
\item the polynomial $L_h(x)-\alpha$ has only simple roots in $\overline{\F}_q$.
\end{enumerate}
\end{lemma}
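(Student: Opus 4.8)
The plan is to exploit the fact that the formal derivative of a linearized polynomial collapses to a constant in characteristic $p$. Writing $h(x)=\sum_{i=0}^{m-1}a_ix^i$, we have $L_h(x)=\sum_{i=0}^{m-1}a_ix^{q^i}$, so its formal derivative is $L_h'(x)=\sum_{i=0}^{m-1}a_iq^ix^{q^i-1}$. Since $q$ is a power of the characteristic $p$, the coefficient $q^i$ vanishes for every $i\ge 1$, and thus every term with $i\ge 1$ drops out, leaving $L_h'(x)=a_0$. The hypothesis $x\nmid h$ is precisely the statement that $a_0\ne 0$, so $L_h'(x)$ is a nonzero constant. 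This single observation drives both parts.

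For part (ii), I would note that $(L_h(x)-\alpha)'=L_h'(x)=a_0$ is a nonzero constant and hence is coprime to $L_h(x)-\alpha$ itself, since a nonzero constant is coprime to every polynomial. As a polynomial that is relatively prime to its own derivative is separable, $L_h(x)-\alpha$ has no repeated roots in $\overline{\F}_q$, which is exactly the claim.

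For part (i), I would first record that the statement requires $\alpha\ne\beta$ (for $\alpha=\beta$ the two polynomials coincide and are visibly not coprime), which is the intended reading in the application where $\alpha,\beta$ range over distinct conjugate roots. Given $\alpha\ne\beta$, consider the difference $(L_h(x)-\alpha)-(L_h(x)-\beta)=\beta-\alpha$, a nonzero constant. Any common divisor of $L_h(x)-\alpha$ and $L_h(x)-\beta$ must divide this difference and is therefore a nonzero constant, so the two polynomials are relatively prime. Equivalently, a shared root $\gamma$ would force $\alpha=L_h(\gamma)=\beta$, a contradiction.

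There is no genuine obstacle here: the entire content of the lemma is the derivative computation, and once $L_h'(x)=a_0\ne 0$ is established both conclusions are immediate. The only points demanding care are making explicit that $x\nmid h$ is what guarantees $a_0\ne 0$, and that in characteristic $p$ the derivatives of all the $q$-power terms vanish; these are exactly where the hypothesis and the linearized structure enter.
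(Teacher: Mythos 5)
Your proof is correct and follows essentially the same route as the paper: part (ii) is exactly the paper's argument (the formal derivative of $L_h(x)-\alpha$ is the nonzero constant $a_0$ because all $q$-power terms have vanishing derivative in characteristic $p$), and part (i), which the paper dismisses as straightforward, is handled by the obvious difference-of-polynomials argument. Your observation that (i) should be read with $\alpha\ne\beta$ (as it is in the application to distinct conjugates) is a fair and correct clarification of the statement.
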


\begin{proof}
\begin{enumerate}[(i)]
\item This item is straightforward.
\item Since $h$ is not divisible by $x$, the formal derivative of the polynomial $L_h(x)-\alpha$ is a nonzero constant and so $L_h(x)-\alpha$ cannot have repeated roots.
\end{enumerate}
\end{proof}

\begin{lemma}\label{lem:aux2}
Let $\alpha\in \overline{\F}_q$ be an element with $\F_q$-order $h=\mq$ and let $g\in \F_q[x]$ be any polynomial not divisible by $x$. If there exists a polynomial $H$ not divisible by $x$ and an element $\beta\in \overline{\F}_q$ such that $\beta$ has $\F_q$-order $H=m_{\beta, q}$ and $L_g(\beta)=\alpha$, then $h$ divides $H$ and the number of such elements $\beta$ is at most $\frac{\Phi_q(H)}{\Phi_q(h)}$.
\end{lemma}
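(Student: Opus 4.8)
The plan is to deduce both claims from the order formula in Theorem~\ref{thm:add}(ii) together with a symmetry argument. For the divisibility, I would argue as follows. By hypothesis $\beta$ has $\F_q$-order $H$ and $L_g(\beta)=\alpha$, so Theorem~\ref{thm:add}(ii) forces $\alpha$ to have $\F_q$-order $\frac{H}{\gcd(H,g)}$. Since $\alpha$ has $\F_q$-order $h$ by assumption, we must have $h=\frac{H}{\gcd(H,g)}$, and in particular $h$ divides $H$. Note that this relation depends only on $H$ and $g$, not on the particular element $\beta$; this will be used repeatedly below.

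For the counting bound, introduce the sets $A=\{\gamma\in\overline{\F}_q : m_{\gamma,q}=H\}$ and $B=\{\gamma\in\overline{\F}_q : m_{\gamma,q}=h\}$, which by the last assertion of Theorem~\ref{thm:add} have sizes $\Phi_q(H)$ and $\Phi_q(h)$ respectively; the set we must bound is exactly $L_g^{-1}(\alpha)\cap A$. The first observation is that $L_g$ maps $A$ into $B$: for any $\gamma\in A$, Theorem~\ref{thm:add}(ii) gives that $L_g(\gamma)$ has $\F_q$-order $\frac{H}{\gcd(H,g)}=h$. The key idea is then to let the group $G=\{L_u : u\in\F_q[x],\ \gcd(u,H)=1\}$ act by composition. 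Using Lemma~\ref{lem:linear}(ii) one checks that each $L_u\in G$ is a bijection of $A$ onto itself, with inverse $L_{u'}$ for any $u'$ satisfying $uu'\equiv 1\pmod H$, and that $L_u$ likewise permutes $B$ because $h\mid H$; moreover $L_g$ is $G$-equivariant since $L_g\circ L_u=L_{gu}=L_{ug}=L_u\circ L_g$.

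The crux is to show that $G$ acts transitively on $B$. For this I would first use Lemma~\ref{lem:count} to see that the $h$-torsion $V_h=\{\gamma\in\overline{\F}_q:L_h(\gamma)=0\}$ has exactly $N(h)$ elements, while for any fixed $\alpha_0\in B$ the set $\{L_w(\alpha_0):w\in\F_q[x]\}$ also has $N(h)$ elements (the map $w\mapsto L_w(\alpha_0)$ has kernel exactly the multiples of $h$) and is contained in $V_h$; hence $V_h=\{L_w(\alpha_0):w\in\F_q[x]\}$ is \emph{cyclic}. Consequently any two elements of $B$ differ by some $L_w$ with $\gcd(w,h)=1$, and a routine Chinese Remainder argument lets me replace $w$ by $u\equiv w\pmod h$ with $\gcd(u,H)=1$, so that $L_u\in G$ carries one to the other. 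This lifting of a unit modulo $h$ to a unit modulo $H$ is the one genuinely technical point; it is the polynomial analogue of the surjectivity of the reduction map $(\mathbb{Z}/H\mathbb{Z})^{*}\to(\mathbb{Z}/h\mathbb{Z})^{*}$.

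Granting transitivity, the conclusion is immediate. Since $L_g\colon A\to B$ is an equivariant map onto a transitive $G$-set, any $L_u\in G$ restricts to a bijection between the fibre over $\alpha$ and the fibre over $L_u(\alpha)$, so all fibres of $L_g$ restricted to $A$ have the same cardinality $s$. Because the fibre over $\alpha$ is nonempty by hypothesis, every fibre is nonempty, and summing over $B$ gives $\Phi_q(H)=|A|=s\cdot|B|=s\,\Phi_q(h)$. Hence the number of admissible $\beta$ equals $\frac{\Phi_q(H)}{\Phi_q(h)}$, which in particular is at most $\frac{\Phi_q(H)}{\Phi_q(h)}$, as claimed. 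I expect the only real obstacle to be the transitivity step, and within it the Chinese Remainder lifting just described; everything else reduces to the order formula of Theorem~\ref{thm:add} and the additivity of $L_g$ from Lemma~\ref{lem:linear}.
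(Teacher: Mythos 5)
Your proof is correct and follows essentially the same route as the paper: both arguments lift units modulo $h$ to units modulo $H$, use this to show the unit group acts transitively on the set of elements of $\F_q$-order $h$, and exploit the equivariance $L_g\circ L_u=L_u\circ L_g$ to conclude that all fibres of $L_g$ restricted to the order-$H$ elements have the same size. Your version is slightly more explicit about the group-action framing and actually establishes equality with $\Phi_q(H)/\Phi_q(h)$ rather than just the stated upper bound, but the underlying ideas are identical.
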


\begin{proof}
From Theorem~\ref{thm:add}, if $L_g(\beta)=\alpha$, then $h=m_{\alpha, q}=\frac{m_{\beta, q}}{\gcd(g, m_{\beta, q})}$ and so $h$ divides $m_{\beta, q}=H$. Therefore, we have the natural group inclusion $\G_1\subseteq \G_2$, where $\G_1=\left(\frac{\F_q[x]}{\langle h\rangle }\right)^*$ and $\G_2=\left(\frac{\F_q[x]}{\langle H\rangle }\right)^*$. In particular, if $M=\Phi_q(h)=|\G_1|$ and $A_1(x), \ldots, A_M(x)\in \F_q[x]$ are the polynomials such that $\gcd(A_i, h)=1$ and $\deg(A_i)<\deg(h)$, then there exist polynomials $B_1(x), \ldots, B_M(x)\in \F_q[x]$ such that $B_i\equiv A_i\pmod h$, $\gcd(B_i, H)=1$ and $\deg(B_i)<\deg(H)$.

Let $S_h\subset \overline{\F}_q$ be the set of elements with $\F_q$-order $h$. In particular, $\alpha\in S$; we claim that $S_h=C$, where $C=\{L_{A_i}(\alpha)\,|\, 1\le i\le M\}$. For this, we observe that, since $\deg(A_i)<\deg(h)$, the differences $A_i-A_j$ with $i\ne j$ are not divisible by $h$ and so $0\ne L_{A_i-A_j}(\alpha)=L_{A_i}(\alpha)-L_{A_j}(\alpha)$. In particular, $C$ has $|M|=\Phi_q(h)=|S_h|$ distinct elements. Since $\gcd(A_i, h)=1$, it follows from Theorem~\ref{thm:add} that the $\F_q$-order of any $L_{A_i}(\alpha)$ equals $h$, hence $C\subseteq S_h$ and so $C=S_h$.

\noindent In addition, if $\gamma\in \overline{\F}_q$ is any element of $\F_q$-order $H=m_{\gamma, q}$ and $L_g(\gamma)=\alpha$, from Theorem~\ref{thm:add}, $\gamma_i:=L_{B_i}(\gamma)$ has $\F_q$-order $\frac{H}{\gcd(B_i, H)}=H$ and satisfies
$$L_g(\gamma_i)=L_{g}(L_{B_i}(\gamma))=L_{B_i}(\alpha)=L_{A_i}(\alpha).$$
\noindent In particular, for any $\theta\in S_h$, the number of elements $\beta \in \overline{\F}_q$ with $\F_q$-order equals $H$ that satisfies $L_g(\beta)=\theta$ is the same. From Theorem~\ref{thm:add}, there exist $\Phi_q(H)$ elements with $\F_q$-order equals $H$ and, since $S_h$ has $\Phi_q(h)$ elements, the result follows.
\end{proof}

\subsection{Proof of Theorem~\ref{thm:main}}

\begin{proof}
Following the notation of Theorem~\ref{thm:main}, let $\alpha\in \F_{q^n}$ be any root of $f$, hence $$f(x)=\prod_{i=0}^{n-1}(x-\alpha^{q^i}).$$ Let $\beta$ be any root of $f(L_g(x))$. In particular, $L_g(\beta)$ is a root of $f$ and, without loss of generality, suppose that $L_g(\beta)=\alpha$. 

\begin{enumerate}[(i)]
\item Set $H=m_{\beta, q}$, the $\F_q$-order of $\beta$. From Lemma~\ref{lem:aux2}, $\mq=h$ divides $H$. Additionally, since $L_{gh}(\beta)=L_{h}(L_g(\beta))=L_h(\alpha)=0$, $H$ divides $gh$. In particular, there exist divisors $h_1$ of $g_1$ and $h_2$ of $g_2$ such that $H=hh_1h_2$. Because $L_g(\beta)=\alpha$, from Theorem~\ref{thm:add}, it follows that $$h=\frac{H}{\gcd(H, g)}=\frac{hh_1h_2}{\gcd(hh_1h_2, g)},$$ and so $h_1h_2=\gcd(hh_1h_2, g)$. Since $h_1h_2$ divides $g_1g_2=g$, we have that $$\gcd(hh_1h_2, g)=h_1h_2\cdot \gcd\left(h, \frac{g}{h_1h_2}\right).$$ Therefore, $\gcd(h, \frac{g}{h_1h_2})=1$, i.e., $\gcd(h, \frac{g_1}{h_1}\cdot \frac{g_2}{h_2})=1$. Recall that $\gcd(g_1, h)=1$ and every irreducible divisor of $g_2$ divides $h$: from the previous equality, we conclude that $\frac{g_2}{h_2}=1$, i.e., $h_2=g_2$. In particular, $H=Gg_2h$, where $G=h_1$ is a divisor of $g_1$.

\item For each divisor $G$ of $g_1$, let $n(G, f)$ be the number of elements $\gamma \in \overline{\F}_q$ such that $\gamma$ has $\F_q$-order $Gg_2h$  and is a root of $f(L_g(x))$. Since $$f(L_g(x))=\prod_{i=0}^{n-1}(L_g(x)-\alpha^{q^i}),$$ from Lemma~\ref{lem:aux1}, $f(L_g(x))$ has only simple roots. In particular, from the previous item, it follows that $$\sum_{G|g_1}n(G, f)=\deg(f(L_g(x)))=nq^{\deg(g)}.$$ In addition, we observe that any root $\gamma$ of $f(L_g(x))$ satisfies $L_g(\gamma)=\alpha^{q^i}$ for some $0\le i\le n-1$ and the $\F_q$-order of $\alpha^{q^i}$ equals $m_{\alpha, q}=h$. Therefore, from Lemma~\ref{lem:aux2}, it follows that $n(G, f)\le n\cdot \frac{\Phi_q(Gg_2h)}{\Phi_q(h)}$: since every prime divisor of $g_2$ divides $h$ and $\gcd(G, h)=1$, we have $$\Phi_q(Gg_2h)=\Phi_q(G)\Phi_q(g_2h)=\Phi_q(G)N(g_2)\Phi_q(h),$$ hence 
$n(G, f)\le n\cdot N(g_2)\Phi_q(G)$. From Eq.~\eqref{eq:sum-phi}, we have $\sum_{G|g_1}\Phi_q(G)=N(g_1)$ and then
\begin{align*}nq^{\deg(g)}=\sum_{G|g_1}n(G, f)\le \sum_{G|g_1}n\cdot N(g_2)\Phi_q(G)=n\cdot N(g_2) \sum_{G|g_1}\Phi_q(G)\\=n\cdot N(g_2)N(g_1)=n\cdot N(g)=nq^{\deg(g)}.\end{align*}
Therefore,  we necessarily have equality $n(G, f)=n\cdot N(g_2)\Phi_q(G)$. In particular, we have shown that, for each divisor $G$ of $g_1$, there exist $n\cdot N(g_2)\Phi_q(G)$ roots of $f(L_g(x))$ with $\F_q$-order equals $Gg_2h$. In addition, from the previous item, this describes all the roots of $f(L_g(x))$. From item (i) of Theorem~\ref{thm:add}, an element $\beta\in \overline{\F}_q$ with $\F_q$-order $Gg_2h$ has degree $\deg(\beta)=\ou(x, Gg_2h)$: in this case, the $n\cdot N(g_2)\Phi_q(G)$ roots of $f(L_g(x))$ with $\F_q$-order $Gg_2h$ are divided in $$\frac{n\cdot N(g_2)\Phi_q(G)}{\ou(x, Gg_2h)}=\frac{nq^{\deg(g_2)}\Phi_q(G)}{\ou(x, Gg_2h)}=\frac{n q^m\Phi_q(G)}{\ou(x, Gg_2h)}$$ sets, according to their minimal polynomial over $\F_q$. In conclusion, for each divisor $G$ of $g_1$, $f(L_g(x))$ has $\frac{nq^m\Phi_q(G)}{\ou(x, Gg_2h)}$ irreducible factors of degree $\ou(x, Gg_2h)$ and this describes the factorization of $f(L_g(x))$ over $\F_q$.
\end{enumerate}
\end{proof}

\section{Applications of Theorem~\ref{thm:main}}
In this section, we provide some consequences of Theorem~\ref{thm:main}. We observe that, under the conditions of Theorem~\ref{thm:main}, the number $NI(f, g)$ of irreducible factors of $f(L_g(x))$ satisfies
\begin{equation}\label{eq:number}NI(f, g)=\sum_{G|g_1} \frac{nq^m\Phi_q(G)}{\ou(x, Gg_2h)}.\end{equation}
In particular, it is interesting to find estimates for the numbers $\ou(x, F)$, where $F$ is a polynomial not divisible by $x$. We start with the following definition.
\begin{definition}
For a polynomial $F\in \F_q[x]$, $\nu(F)$ is the greatest nonnegative integer $d$ with the property that there exists an irreducible polynomial $h\in \F_q[x]$ such that $h^d$ divides $F$. Also, $\rad(F)$ denotes the \emph{squarefree} part of $F$, i.e., $\rad(F)$ equals the product of the distinct irreducible divisors of $F$.
\end{definition}
Since finite fields are \emph{perfect fields}, $\nu(F)$ is the maximal multiplicity of a root of $F$ over $\overline{\F}_q$. Moreover, it is clear that $F$ divides $\rad(F)^{\nu(F)}$. The following lemma provides some basic facts on the numbers $\ou(x, F)$.

\begin{lemma}\label{lem:bound}
Let $F, G\in \F_q[x]$ be polynomials not divisible by $x$. The following hold:
\begin{enumerate}[(i)]
\item if $\gcd(F, G)=1$, then $\ou(x, FG)=\mathrm{lcm}(\ou(x, F), \ou(x, G))\le \ou(x, F)\cdot \ou(x, G).$ In particular, if $F$ is squarefree, $\ou(x, F)$ is not divisible by $p$.
\item $\ou(x, F)=\ou(x, \rad(F))\cdot p^r$, where $r=\lceil\log_p\nu(F)\rceil$.

\item If $\rad(F)$ divides $G$, then \begin{equation}\label{eq:ineq-1}\ou(x, FG)=\ou(x, G)\cdot p^{u},\end{equation} 
where $u=\lceil \log_p(\nu(FG))\rceil-\lceil \log_p(\nu(G))\rceil$ satisfies $$u\le \lceil \log_p(\nu(F)/\nu(G)+1)\rceil\le \left\lceil\frac{\nu(F)}{\nu(G)}\right\rceil, \nu(G)\ge 1.$$
\end{enumerate}
\end{lemma}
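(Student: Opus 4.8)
The plan is to reduce every statement to the single case of a prime-power modulus $h^e$, with $h$ irreducible and $h\ne x$, and then assemble the general claims by the Chinese Remainder Theorem. The heart of the matter is the identity $\ou(x,h^e)=\ou(x,h)\cdot p^{\lceil\log_p e\rceil}$; once that is established, parts (i)--(iii) become bookkeeping and elementary estimates.

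First I would prove (i). Since $\gcd(F,G)=1$ and neither is divisible by $x$, the class of $x$ is a unit modulo $FG$, and the Chinese Remainder Theorem gives a ring isomorphism $\F_q[x]/\langle FG\rangle\cong \F_q[x]/\langle F\rangle\times\F_q[x]/\langle G\rangle$ carrying the class of $x$ to the pair of its classes. The order of an element of a direct product of groups is the least common multiple of the orders of its components, which yields $\ou(x,FG)=\mathrm{lcm}(\ou(x,F),\ou(x,G))$; and $\mathrm{lcm}(a,b)\le ab$ is immediate. For the final assertion, if $F$ is squarefree I factor $F=\prod_i h_i$ into distinct irreducibles, none equal to $x$, and iterate the lcm formula to get $\ou(x,F)=\mathrm{lcm}_i\ou(x,h_i)$; since $\F_q[x]/\langle h_i\rangle\cong\F_{q^{\deg h_i}}$ is a field, $\ou(x,h_i)$ divides $q^{\deg h_i}-1$, which is coprime to $p$, and an lcm of integers coprime to $p$ is coprime to $p$.

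Now for the prime-power identity, which is the main obstacle. The ring $R=\F_q[x]/\langle h^e\rangle$ is local with residue field $\F_{q^d}$ ($d=\deg h$), and the reduction map $R^*\to\F_{q^d}^*$ is surjective with kernel $1+\langle h\rangle$ of order $q^{(e-1)d}$, a power of $p$. As $R^*$ is abelian it splits as its Sylow $p$-subgroup times a complement mapping isomorphically onto $\F_{q^d}^*$, so the order of $x$ in $R^*$ is $\ell\cdot p^s$, where $\ell=\ou(x,h)$ is the order of the image of $x$ in $\F_{q^d}^*$ and $p^s$ is the order of the $p$-component of $x$. Writing $y=x^\ell$, the element $y$ lies in $1+\langle h\rangle$ and $p^s$ equals its order. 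Two facts then combine: first, $x^\ell-1$ is separable (its derivative $\ell x^{\ell-1}$ is coprime to it, since $\gcd(\ell,p)=1$ and $x\nmid x^\ell-1$), so $h$ divides $x^\ell-1$ exactly once, i.e.\ $v_h(y-1)=1$; second, in characteristic $p$ the identity $(1+w)^{p^j}=1+w^{p^j}$ gives $v_h(y^{p^j}-1)=p^j\,v_h(y-1)=p^j$. Hence $y^{p^j}\equiv 1\pmod{h^e}$ iff $p^j\ge e$, so $s=\lceil\log_p e\rceil$. Pinning this exponent down is exactly where the separability fact $v_h(x^\ell-1)=1$ is indispensable.

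With this in hand, (ii) follows by factoring $F=\prod_i h_i^{e_i}$, applying (i) and the prime-power identity to obtain $\ou(x,F)=\mathrm{lcm}_i\big(\ou(x,h_i)\,p^{\lceil\log_p e_i\rceil}\big)$; since each $\ou(x,h_i)$ is coprime to $p$, this equals $\big(\mathrm{lcm}_i\ou(x,h_i)\big)\cdot p^{\max_i\lceil\log_p e_i\rceil}=\ou(x,\rad F)\cdot p^{\lceil\log_p\nu(F)\rceil}$, using that $\lceil\log_p(\cdot)\rceil$ is nondecreasing, so its maximum is attained at $\max_i e_i=\nu(F)$. Finally, for (iii) the hypothesis $\rad(F)\mid G$ forces $\rad(FG)=\rad(G)$; dividing the two instances of (ii) for $FG$ and for $G$ cancels the factor $\ou(x,\rad G)$ and gives $\ou(x,FG)=\ou(x,G)\,p^u$ with $u=\lceil\log_p\nu(FG)\rceil-\lceil\log_p\nu(G)\rceil$. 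For the bounds I would use $\nu(FG)\le\nu(F)+\nu(G)$ together with monotonicity of $\lceil\log_p(\cdot)\rceil$ and the subadditivity $\lceil a+b\rceil\le\lceil a\rceil+\lceil b\rceil$ to reach $u\le\lceil\log_p(\nu(F)/\nu(G)+1)\rceil$; the last inequality $\lceil\log_p(t+1)\rceil\le\lceil t\rceil$ reduces, with $k=\lceil t\rceil$, to checking $1+t\le 1+k\le p^k$ for $p\ge 2$, an elementary induction on $k$.
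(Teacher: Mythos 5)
Your proof is correct, and parts (i) and (iii) follow essentially the same lines as the paper: (i) is the CRT/lcm observation together with $\ou(x,h)\mid q^{\deg h}-1$ for $h$ irreducible, and (iii) is obtained by dividing two instances of (ii) and then the same ceiling estimates (your route via subadditivity of $\lceil\cdot\rceil$ is equivalent to the paper's $\lceil y_0-y\rceil\ge\lceil y_0\rceil-\lceil y\rceil$). The genuine difference is in (ii), which is the heart of the lemma. The paper argues globally: with $s=\ou(x,\rad(F))$ and $S=\ou(x,F)$, it squeezes $S$ between $s$ and $sp^r$ using $F\mid\rad(F)^{\nu(F)}\mid(x^s-1)^{p^r}=x^{sp^r}-1$, writes $S=sp^e$ with $e\le r$, and then pins down the exponent by computing $\nu(x^{sp^e}-1)=p^e\ge\nu(F)$. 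You instead localize: you reduce to the prime-power case $h^e$ by CRT, invoke the structure of the unit group of the local ring $\F_q[x]/\langle h^e\rangle$ (whose Sylow $p$-subgroup is $1+\langle h\rangle$), and compute the order of the $p$-component of $x$ via the valuation identity $v_h(x^{\ell p^j}-1)=p^j\,v_h(x^\ell-1)=p^j$, the key input being separability of $x^\ell-1$. Both arguments ultimately rest on the same two facts — the Frobenius identity $(1+w)^{p^j}=1+w^{p^j}$ and squarefreeness of $x^s-1$ when $p\nmid s$ — but yours proceeds one irreducible at a time and yields the sharper local statement $\ou(x,h^e)=\ou(x,h)\,p^{\lceil\log_p e\rceil}$ as a byproduct, at the cost of invoking the Sylow decomposition of the unit group; the paper's version is shorter and stays entirely at the level of divisibility of polynomials $x^N-1$.
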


\begin{proof}
\begin{enumerate}[(i)]
\item The equality $\ou(x, FG)=\mathrm{lcm}(\ou(x, F), \ou(x, G))$ follows by direct calculations. If $F$ is squarefree and factors as $F=F_1\cdots F_j$, where each $F_i$ is irreducible and of degree $d_i$, $\ou(x, F)$ is the least common multiple of the numbers $\ou(x, F_i)$, for $1\le i\le j$: recall that $\ou(x, F_i)$ divides $\Phi_q(F_i)=q^{d_i}-1$, which is not divisible by $p$. In particular, $\ou(x, F)$ is not divisible by $p$.
\item Let $s=\ou(x, \rad(F))$ and $S=\ou(x, F)$. In particular, $\rad(F)$ divides $x^s-1$ and, since $\rad(F)$ is squarefree, from the previous item, it follows that $s$ is not divisible by $p$. For $r=\lceil\log_p\nu(F)\rceil$, we have $p^r>\nu(F)$, hence $(x^s-1)^{p^r}=x^{sp^r}-1$ is divisible by $\rad(F)^{\nu(F)}$. Recall that $F$ divides $\rad(F)^{\nu(F)}$, hence $F$ divides $x^{sp^r}-1$ and so $S$ divides $sp^r$. Clearly, $S$ is divisible by $s$ and then $S=sp^e$ for some nonnegative integer $e\le r$. Since $s$ is not divisible by $p$, the polynomial $x^s-1$ has no repeated irreducible factors and so $\nu(x^{sp^e}-1)=p^e$. However, $F$ divides $x^{sp^e}-1$, and then $p^e= \nu(x^{sp^e}-1)\ge \nu(F)$. Therefore, $e\ge \lceil \log_p\nu(F)\rceil=r$ and, since $e\le r$, we necessarily have $e=r$.
\item Since $\rad(F)$ divides $G$, $\rad(G)=\rad(FG)$ and so Eq.~\eqref{eq:ineq-1} follows from the previous item. We observe that $\nu(FG)\le \nu(F)+\nu(G)$ for any polynomials $F$ and $G$. In addition, $\lceil y_0-y\rceil\ge \lceil y_0\rceil-\lceil y\rceil$ for any real numbers $y_0>y>0$, and then for $\nu(G)\ge 1$, $$u=\lceil \log_p(\nu(FG))\rceil-\lceil \log_p(\nu(G))\rceil \le \lceil \log_p(\nu(F)/\nu(G)+1)\rceil.$$
To finish the proof, we observe that $\log_a(y+1)\ge y$ for any real numbers $y$ and $a>1$.
\end{enumerate}
\end{proof}

As follows, we obtain a lower bound on the number $NI(f, g)$ of irreducible factors of $f(L_g(x))$ over $\F_q$: in particular, a characterization of the irreducible polynomials of the form $f(L_g(x))$ is given. 

\begin{theorem}\label{thm:bound}
Let $f(x)\in \F_q[x]$ be an irreducible polynomial of degree $n$ such that any of its roots has $\F_q$-order $h$. Let $g\in \F_q[x]$ be a (non constant) monic polynomial such that $\gcd(g, x)=1$ and write $g=g_1g_2$, where $\gcd(g_1, h)=1$ and each irreducible factor of $g_2$ divides $h$. If $L_g$ denotes the $q$-associate of $g$ and $\deg(g_2)=m$, the number $NI(f, g)$ of irreducible factors of $f(L_g(x))$ over $\F_q$ satisfies the following:
\begin{equation}\label{eq:bound}
NI(f, g) \ge \frac{q^mW(g_1)}{p^u}\ge W(g_1)\left(\frac{q}{p}\right)^m,
\end{equation}
where $u=\lceil \log_p(\nu(g_2h))\rceil-\lceil \log_p(\nu(h))\rceil$ and $W(g_1)$ is the number of distinct monic divisors of $g_1$ over $\F_q$. In particular, $f(L_g(x))$ is irreducible over $\F_q$ if only if $q=p$ and the triple $(p, h, g)$ satisfies one of the following conditions:
\begin{enumerate}
\item $p$ is any prime and $g=H$ is a polynomial of degree one that divides $h$ but does not divide $\frac{x^n-1}{h}$;
\item $p=2$, $g=x^2+1$, $h$ is squarefree and is divisible by $x+1$.
\end{enumerate}
\end{theorem}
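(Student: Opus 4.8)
The plan is to start from the exact count in Eq.~\eqref{eq:number}, namely $NI(f,g)=\sum_{G\mid g_1}\frac{nq^m\Phi_q(G)}{\ou(x,Gg_2h)}$, and bound each summand from below. First I would record two facts that hold throughout: since the roots of $f$ have $\F_q$-order $h$, item (i) of Theorem~\ref{thm:add} gives $n=\deg(f)=\ou(x,h)$; and since every irreducible factor of $g_2$ divides $h$, we have $\rad(g_2)\mid h$, so Lemma~\ref{lem:bound}(iii) applies to the pair $(g_2,h)$ and yields $\ou(x,g_2h)=\ou(x,h)\cdot p^{u}$ with exactly the exponent $u=\lceil\log_p(\nu(g_2h))\rceil-\lceil\log_p(\nu(h))\rceil$ from the statement. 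Because $\gcd(G,g_2h)=1$ for every divisor $G$ of $g_1$ (as $\gcd(g_1,h)=1$ and $\rad(g_2)\mid h$), Lemma~\ref{lem:bound}(i) gives $\ou(x,Gg_2h)=\mathrm{lcm}(\ou(x,G),\ou(x,g_2h))\le \ou(x,G)\cdot\ou(x,h)\cdot p^{u}$. Substituting and using that $\ou(x,G)$ divides $\Phi_q(G)$ (so $\Phi_q(G)/\ou(x,G)\ge 1$), each summand is at least $q^m/p^u$, and summing over the $W(g_1)$ divisors of $g_1$ gives the first inequality $NI(f,g)\ge q^mW(g_1)/p^u$. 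The second inequality reduces to $u\le m$, which follows from $u\le\lceil\nu(g_2)/\nu(h)\rceil\le\nu(g_2)\le\deg(g_2)=m$ (the estimate in Lemma~\ref{lem:bound}(iii), valid once $g_2$ is non-constant; the case $g_2=1$ is trivial).

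For the characterization I would argue that irreducibility forces $W(g_1)(q/p)^m\le NI(f,g)=1$. Since $q=p^k$ with $k\ge 1$ we have $q/p\ge 1$ and $W(g_1)\ge 1$, so both factors must equal $1$: thus $g_1=1$ (the only monic polynomial with a single monic divisor) and $(q/p)^m=1$. As $g$ is non-constant and $g_1=1$ forces $g=g_2$ with $m=\deg(g)\ge 1$, the latter gives $q=p$. Conversely, once $g_1=1$ and $q=p$, the sum in Eq.~\eqref{eq:number} collapses to its single term $G=1$, and using $n=\ou(x,h)$ together with $\ou(x,g_2h)=\ou(x,h)p^{u}$ I obtain the clean identity $NI(f,g)=q^m/p^{u}=p^{m-u}$. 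Hence $f(L_g(x))$ is irreducible exactly when $u=m$, and the entire problem becomes the arithmetic question of classifying the pairs $(p,h,g)$ with $g=g_2$ for which $u=m$.

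To resolve $u=m$ I would first extract the admissible degrees. Combining $u=m$ with the Lemma~\ref{lem:bound}(iii) estimate $u\le\lceil\log_p(\nu(g_2)/\nu(h)+1)\rceil$ and the crude bounds $\nu(g_2)\le m$, $\nu(h)\ge 1$ forces $p^{m-1}<m+1$, whose only solutions are $m=1$ (any $p$) and $m=2$ (only $p=2$); moreover $u=m\le\nu(g_2)\le m$ pins down $\nu(g_2)=m$, so $g_2$ is the $m$-th power of a single linear polynomial $H$ coprime to $x$. For $m=2$, $p=2$ the only such $H$ is $x+1$, giving $g=(x+1)^2=x^2+1$. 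It then remains to translate $u=m$ into a condition on $h$: writing $b$ for the multiplicity of $H$ in $h$, a direct computation of $\nu(g_2h)$ and $\nu(h)$ shows that $u=m$ holds precisely when $H$ attains the maximal multiplicity $\nu(h)$ and $\nu(h)$ is itself a power of $p$ (for $m=2$, $p=2$ this degenerates to $\nu(h)=1$, i.e.\ $h$ squarefree, which is condition~2). The decisive point—where I expect the real work—is the equivalence, in the case $m=1$, between ``$H$ has multiplicity $\nu(h)=p^s$ in $h$'' and the stated condition ``$H\mid h$ but $H\nmid\frac{x^n-1}{h}$''. This hinges on the observation that $n=\ou(x,h)$ forces the $p$-part of $n$ to equal $p^{\lceil\log_p\nu(h)\rceil}$ (via Lemma~\ref{lem:bound}(ii) applied factor by factor), so that every irreducible factor of $h$ occurs in $x^n-1$ with multiplicity exactly $p^{\lceil\log_p\nu(h)\rceil}$; thus $H\nmid(x^n-1)/h$ is equivalent to $b=p^{\lceil\log_p\nu(h)\rceil}$, which in turn is equivalent to $b=\nu(h)$ being a power of $p$. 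Verifying this chain carefully, and confirming that both displayed conditions indeed yield $u=m$ (hence $NI(f,g)=1$), is the main obstacle; the remaining degree bookkeeping is routine.
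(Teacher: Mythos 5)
Your proof follows the paper's skeleton for the inequality and for the reduction to $m=1$ or $(m,p)=(2,2)$: the same use of Eq.~\eqref{eq:number}, of $\ou(x,g_2h)=np^u$ from Lemma~\ref{lem:bound}(iii), of $\ou(x,G)\le\Phi_q(G)$, and of the bound $p^{m-1}<m+1$. Where you genuinely diverge is in translating $u=m$ into the stated conditions on $h$. The paper, in the case $m=1$, simply notes that $\ou(x,hH)\in\{n,np\}$ and that $\ou(x,hH)=n$ exactly when $hH$ divides $x^n-1$, i.e.\ when $H$ divides $\frac{x^n-1}{h}$; your route instead introduces the multiplicity $b$ of $H$ in $h$, shows $u=1$ iff $b=\nu(h)$ and $\nu(h)$ is a power of $p$, and then identifies this with $H\nmid\frac{x^n-1}{h}$ via $x^n-1=(x^s-1)^{p^{\lceil\log_p\nu(h)\rceil}}$ from Lemma~\ref{lem:bound}(ii). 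Both work; the paper's order argument is shorter, yours makes the role of the multiplicity of $H$ explicit. One concrete correction: your blanket claim that ``$u=m$ holds precisely when $b=\nu(h)$ and $\nu(h)$ is a power of $p$'' is false for $m=2$, $p=2$ (take $b=\nu(h)=2$: then $\nu(g_2h)=4$ and $u=2-1=1\ne 2$). The parenthetical conclusion $\nu(h)=1$ is the right answer but does not follow from that claim; you must derive it separately, e.g.\ from $2=u\le\lceil\log_2(\nu(g_2)/\nu(h)+1)\rceil$ with $\nu(g_2)\le 2$ (as the paper does), or directly from $\nu(g_2h)\le\nu(h)+2$, which forces $\nu(h)+2>2\nu(h)$. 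With that one step supplied, your argument is complete.
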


\begin{proof}
Let $\alpha$ be any root of $f$, hence $\deg(\alpha)=n$ and $h=\mq$. From item (i) of Theorem~\ref{thm:add}, $n=\ou(x, h)$.  From item (i) of Lemma~\ref{lem:count}, for each divisor $G$ of $g_1$, we have $\ou(x, Gg_2h)\le \ou(x, G)\cdot \ou(x, g_2h)$. We have the trivial bound $\ou(x, G)\le \Phi_q(G)$. Since $\rad(g_2)$ divides $h$, from item (iii) of Lemma~\ref{lem:bound}, we have that $$\ou(x, g_2h)=\ou(x, h)\cdot p^u=np^u.$$ Taking these estimates into Eq.~\eqref{eq:number}, we obtain the following inequality:
$$NI(f, g)=\sum_{G|g_1} \frac{nq^m\Phi_q(G)}{\ou(x, Gg_2h)}\ge \sum_{G|g_1}\frac{nq^m\Phi_q(G)}{np^u\Phi_q(G)}=\frac{q^m}{p^u}\sum_{G|g_1} 1=\frac{q^mW(g_1)}{p^u},$$
where $W(g_1)$ is the number of distinct monic divisors of $g_1$ over $\F_q$. In addition, item (iii) of Lemma~\ref{lem:bound} shows that $u\le \lceil\frac{\nu(g_2)}{\nu(h)}\rceil\le \nu(g_2)\le \deg(g_2)=m$ and so $p^u\le p^m$. This proves Inequality~\eqref{eq:bound}. 

If $f(L_g(x))$ is irreducible, then $NI(f, g)=1$: since $NI(f, g)$ is at least $W(g_1)(q/p)^m$ (which is a positive integer), if $f(L_g(x))$ is irreducible, then $W(g_1)(q/p)^m=1$ and so $q=p$ and $W(g_1)=1$, i.e., $g_1=1$ and $g_2=g$. In particular, $h$ is a non constant polynomial and so $\nu(h)\ge 1$. We now need a more restricted condition: $q^m=p^u$. Since $q=p$, it follows that $u=m$. Since $\nu(h)\ge 1$, from item (iii) of Lemma~\ref{lem:bound} we have that $u\le \lceil \log_p(\nu(g_2)/\nu(h)+1)     
\rceil=\lceil \log_p(\nu(g_2)/\nu(h)+1)     
\rceil$, hence \begin{equation}\label{eq:false}m-1=u-1<\log_p(\nu(g_2)/\nu(h)+1)\le \log_p(m+1).\end{equation} It follows by induction on $a$ and $b$ that $a^{b-1}\ge b+1$ if $a, b\ge 2$ are positive integers such that $a\ge 3$ or $b\ge 3$. In particular, Inequality~\eqref{eq:false} is false unless $m=1$ and $p$ is any prime number or $m=2$ and $p=2$. We divide into cases.

\begin{enumerate}
\item If $m=1$ and $p$ is any prime number, since $g_2=g$, it follows that $g$ equals a polynomial $H$ of degree one that divides $h$: taking account in Eq.~\eqref{eq:number} we have $NI(f, g)=\frac{np}{\ou(x, hH)}$. In particular, $f(L_g(x))$ is irreducible if and only if $\ou(x, hH)=np$. Of course, $\ou(x, hH)$ is divisible by $\ou(x, h)=n$ and so $\ou(x, hH)=np$ if and only if $\ou(x, hH)\ne n$: since, $h$ divides $x^n-1$ and $H$ is irreducible, we have that $\ou(x, hH)\ne n$ if and only if $H$ does not divide $\frac{x^n-1}{h}$.

\item If $m=2$ and $p=2$, Inequality~\eqref{eq:false} yields $$1<\log_2(\nu(g)/\nu(h)+1)\le \log_2 3,$$ hence $1<\nu(g)/\nu(h)\le 2$. Since $\nu(g)\le \deg(g)=\deg(g_2)=2$ and $\nu(h)\ge 1$, it follows that $\nu(g)=2=\deg(g)$ and $\nu(h)=1$ and so $g$ is the square of an irreducible polynomial of degree one and $h$ is squarefree. Since $\gcd(g, x)=1$ and $x, x+1$ are the only degree one irreducible polynomials over $\F_q=\F_2$, it follows that $g=(x+1)^2=x^2+1$: taking account in Eq.~\eqref{eq:number} we obtain $$NI(f, g)=\frac{4n}{\ou(x, h(x+1)^2)}.$$ In particular, $f(L_g(x))$ is irreducible if and only if $\ou(x, h(x+1)^2)=4n$. 
Let $i$ be the greatest power of $x+1$ that divides $h$; since $h$ is squarefree, $i\le 1$ and $\nu(h(x+1)^2)=2+i$. If $h_0=\mathrm{lcm}(x+1, h)$, then $\rad(h(x+1)^2)=h_0$ and $\ou(x, h_0)=n$. From item (ii) of Lemma~\ref{lem:bound}, it follows that $\ou(x, h(x+1)^2)=\ou(x, h_0)\cdot 2^r=n2^r$, where $r=\lceil \log_p(2+i)\rceil$. Therefore, $f(L_g(x))$ is irreducible if and only if $n2^r=\ou(x, h(x+1)^2)=4n$, i.e., $r=2$. The latter is equivalent to $i=1$, i.e., $h$ is divisible by $x+1$.
\end{enumerate}
\end{proof}

\begin{remark}\label{remark:zero-trace}
If we take $q=p$ a prime and $g=x-1$, condition (1) in Theorem~\ref{thm:bound} can be translated as follows: if $f(x)\in \F_p[x]$ is an irreducible polynomial of degree $n$ such that any of its roots has $\F_p$-order $h$, then $f(x^p-x)\in \F_p[x]$ is irreducible if and only if $x-1$ divides $h$ but does not divides $\frac{x^n-1}{h}$. Since $x-1$ divides $x^n-1$, this condition is equivalent to $h$ does not divide $H=\frac{x^n-1}{x-1}$. If $\alpha$ is any root of $f$, the $\F_q$-order of $\alpha$ is $h$ and so $h$ does not divide $H=\frac{x^n-1}{x-1}$ if and only if $L_H(\alpha)\ne 0$: one can see that $L_H(\alpha)=\sum_{i=0}^{n-1}\alpha^{p^i}=a_{n-1}$, where $a_{n-1}$ is the coefficient of $x^{n-1}$ in $f(x)$, commonly called the \emph{trace} of $f(x)$. 
\end{remark}

Based on the previous remark, the following corollary is straightforward.

\begin{corollary}
If $f(x)\in \F_p[x]$ is an irreducible polynomial, then $f(x^p-x)$ is irreducible if and only if the coefficient $a_{n-1}$ of $x^{n-1}$ in $f(x)$ is not zero.
\end{corollary}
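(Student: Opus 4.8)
The plan is to deduce the corollary directly from Remark~\ref{remark:zero-trace}, which already performs the essential translation; almost no new work is required. First I would specialize the setting to $q=p$ and $g=x-1$. Since the $q$-associate of $g=x-1$ is $L_{x-1}(x)=x^p-x$, the composition $f(L_g(x))$ is precisely $f(x^p-x)$, and the hypothesis $\gcd(g,x)=1$ holds because $\gcd(x-1,x)=1$. Let $\alpha$ be any root of $f$, put $n=\deg f$, and let $h=\mq$ be the $\F_p$-order of $\alpha$; these are the data attached to $f$ to which Theorem~\ref{thm:bound} applies.

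Next I would invoke condition (1) of Theorem~\ref{thm:bound}, as unpacked in Remark~\ref{remark:zero-trace}: with $g=x-1$, the polynomial $f(x^p-x)$ is irreducible exactly when $x-1$ divides $h$ but does not divide $\frac{x^n-1}{h}$, and since $x-1$ always divides $x^n-1$, this two-part requirement collapses into the single divisibility condition that $h$ does \emph{not} divide $H=\frac{x^n-1}{x-1}$. Then I would use the defining property of the $\F_p$-order recorded after Lemma~\ref{lem:linear}: $h=\mq$ divides $H$ if and only if $L_H(\alpha)=0$. Reading off $L_H$ from $H=1+x+\cdots+x^{n-1}$ gives $L_H(\alpha)=\sum_{i=0}^{n-1}\alpha^{p^i}=\Tr(\alpha)$, which equals, up to sign, the coefficient $a_{n-1}$ of $x^{n-1}$ in $f(x)$. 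Chaining these equivalences yields the claim: $f(x^p-x)$ is irreducible $\iff h\nmid H \iff L_H(\alpha)\ne 0 \iff a_{n-1}\ne 0$.

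The only point requiring care — and thus the sole ``obstacle'' — is the verification that the two-part condition $x-1\mid h$ and $x-1\nmid\frac{x^n-1}{h}$ really does reduce to the single statement $h\nmid H$ across \emph{all} cases, including when $x-1\nmid h$ (in which case $h\mid (x-1)H$ with $\gcd(h,x-1)=1$ forces $h\mid H$, so $f(x^p-x)$ is reducible, consistent with condition (1) failing) and when $p\mid n$, so that $x-1$ occurs with multiplicity in $x^n-1=(x^m-1)^{p^k}$. Both cases are absorbed automatically once one passes to the trace criterion $L_H(\alpha)=\Tr(\alpha)=\pm a_{n-1}$, since in characteristic $p$ the vanishing of $\Tr(\alpha)$ is equivalent to $a_{n-1}=0$ regardless of sign. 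Hence the trace being nonzero is the clean, case-free criterion, and the corollary follows.
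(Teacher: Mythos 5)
Your proposal is correct and follows exactly the paper's route: the paper derives this corollary directly from Remark~\ref{remark:zero-trace}, which specializes condition (1) of Theorem~\ref{thm:bound} to $q=p$, $g=x-1$, reduces the divisibility condition to $h\nmid \frac{x^n-1}{x-1}$, and identifies $L_{\frac{x^n-1}{x-1}}(\alpha)$ with the trace coefficient $a_{n-1}$ (up to sign, which is immaterial). Your added verification that the two-part condition collapses to $h\nmid H$ in all cases is a careful filling-in of a step the paper leaves implicit, but the argument is the same.
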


The previous corollary is a particular case of a well known result; see Theorem 3.82 of \cite{LN}.

\begin{remark}
If we take $q=p=2$ and $g=x^2+1$, following the ideas in Remark~\ref{remark:zero-trace}, condition (2) in Theorem~\ref{thm:bound} can be translated as follows: if $f(x)\in \F_2[x]$ is an irreducible polynomial of degree $n$ such that any of its roots has $\F_2$-order $h$, then $f(x^4+x)\in \F_2[x]$ is irreducible if and only if $h$ is squarefree and the coefficient $a_{n-1}$ of $x^{n-1}$ in $f(x)$ is not zero. It is not hard to see that $h$ is squarefree if and only if $\ou(x, f)=n$ is not divisible by $p=2$, i.e., $n$ is odd.
\end{remark}

Based on the previous remark, we obtain the following corollary.

\begin{corollary}
If $f(x)\in \F_2[x]$ is an irreducible polynomial of degree $n$, then $f(x^4+x)\in \F_2[x]$ is irreducible if and only if the coefficient $a_{n-1}$ of $x^{n-1}$ in $f(x)$ is not zero and $n$ is odd.
\end{corollary}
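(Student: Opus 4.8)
The plan is to read this off directly from condition~(2) of Theorem~\ref{thm:bound}. Observe first that $f(x^4+x)=f(L_g(x))$ with $g=x^2+1=(x+1)^2$ and $q=p=2$, since the $q$-associate of $x^2+1$ is $x^{q^2}+x=x^4+x$. Let $h$ denote the common $\F_2$-order of the roots of $f$. By Theorem~\ref{thm:bound}, $f(L_g(x))$ is irreducible over $\F_2$ precisely when $(p,h,g)$ meets condition~(2), i.e.\ when $h$ is squarefree and divisible by $x+1$. It therefore suffices to translate these two polynomial conditions on $h$ into the stated arithmetic conditions on $n$ and $a_{n-1}$.

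First I would show that $h$ is squarefree if and only if $n$ is odd. Let $\alpha$ be a root of $f$, so that $\deg(\alpha)=n$ and $\mq=h$; item~(i) of Theorem~\ref{thm:add} gives $n=\ou(x,h)$. If $h$ is squarefree, then by item~(i) of Lemma~\ref{lem:bound} the integer $\ou(x,h)=n$ is not divisible by $p=2$, so $n$ is odd. Conversely, if $h$ is not squarefree then $\nu(h)\ge 2$, and item~(ii) of Lemma~\ref{lem:bound} yields $\ou(x,h)=\ou(x,\rad(h))\cdot 2^{r}$ with $r=\lceil\log_2\nu(h)\rceil\ge 1$, forcing $n$ to be even. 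Hence $h$ is squarefree exactly when $n$ is odd.

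Next, assuming $n$ odd, I would show $x+1\mid h$ if and only if $a_{n-1}\neq 0$. Following Remark~\ref{remark:zero-trace}, set $H=\frac{x^n-1}{x+1}=x^{n-1}+\cdots+1$, so that the trace $a_{n-1}=\sum_{i=0}^{n-1}\alpha^{2^i}=L_H(\alpha)$; this vanishes exactly when $h=\mq$ divides $H$. Because $n$ is odd, $H(1)\equiv n\equiv 1\pmod 2$, so $x+1\nmid H$, i.e.\ $\gcd(x+1,H)=1$. Since $h$ is squarefree and divides $x^n-1=(x+1)H$, it factors as $h=(x+1)^{\varepsilon}h'$ with $\varepsilon\in\{0,1\}$ and $h'\mid H$; thus $x+1\mid h$ iff $\varepsilon=1$ iff $h\nmid H$ iff $a_{n-1}\neq 0$. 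Combining the two translations gives the corollary.

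The only delicate point is this last equivalence: the clean correspondence between the polynomial statement $x+1\mid h$ and the coefficient statement $a_{n-1}\neq 0$ hinges on the coprimality $\gcd(x+1,H)=1$, which holds precisely because $n$ is odd (and it also uses that $h$ is squarefree, i.e.\ that finite fields are perfect). This is why the two hypotheses ``$n$ odd'' and ``$a_{n-1}\neq 0$'' are genuinely coupled through condition~(2) rather than arising independently; everything else reduces to the bookkeeping already packaged in Theorem~\ref{thm:bound} and the preceding remark.
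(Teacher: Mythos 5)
Your proof is correct and follows essentially the same route as the paper: it reads the statement off condition (2) of Theorem~\ref{thm:bound}, translating ``$h$ squarefree'' into ``$n$ odd'' via Lemma~\ref{lem:bound} and ``$x+1\mid h$'' into ``$a_{n-1}\neq 0$'' via the trace computation of Remark~\ref{remark:zero-trace}. The only difference is that you spell out the coprimality $\gcd\left(x+1,\tfrac{x^n-1}{x+1}\right)=1$ for odd $n$, a detail the paper leaves implicit.
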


We observe that the irreducible polynomials of the form $f(L_g(x))$ arising from Theorem~\ref{thm:bound} are such that every irreducible factor of $g$ divides the $\F_q$-order $h$ of the roots of $f$. In the following theorem, we consider the opposite situation. 

\begin{theorem}\label{thm:irred}
Let $f(x)\in \F_q[x]$ be an irreducible polynomial of degree $n$ such that any of its roots has $\F_q$-order $h$. Let $g\in \F_q[x]$ be a monic irreducible polynomial of degree $d\ge 1$ such that $\gcd(g, x)=\gcd(g, h)=1$ and write $e=\ou(x, g)$. Then $f(L_g(x))$ factors as one irreducible polynomial of degree $n$ and $\frac{n(q^d-1)}{\mathrm{lcm}(n, e)}$ irreducible polynomials of degree $\mathrm{lcm}(n, e)$. In particular, $f(L_g(x))$ factors into irreducible polynomials of the same degree if and only if $g$ divides $x^n-1$ and, in this case, the degree of each irreducible factor is $n$.
\end{theorem}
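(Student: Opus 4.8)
The plan is to read off both assertions as a direct specialization of Theorem~\ref{thm:main}. Since $g$ is irreducible with $\gcd(g,h)=1$, in the notation of that theorem the factorization $g=g_1g_2$ (with $\gcd(g_1,h)=1$ and every irreducible factor of $g_2$ dividing $h$) forces $g_2=1$ and $g_1=g$; consequently $m=\deg(g_2)=0$ and $N(g_2)=q^0=1$. The only monic divisors $G$ of $g_1=g$ are $G=1$ and $G=g$, so item (ii) of Theorem~\ref{thm:main} produces exactly two families of irreducible factors, and I only need to evaluate the quantities $\Phi_q(G)$ and $\ou(x,Gg_2h)$ in each case.

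First, for $G=1$: here $\Phi_q(1)=1$ and the relevant degree is $\ou(x,g_2h)=\ou(x,h)$. Since a root $\alpha$ of $f$ has $\F_q$-order $h$ and $\deg(\alpha)=n$, item (i) of Theorem~\ref{thm:add} gives $\ou(x,h)=n$; thus this family consists of $\frac{n\cdot 1\cdot 1}{n}=1$ irreducible factor of degree $n$. Next, for $G=g$: here $\Phi_q(g)=q^d-1$ because $g$ is irreducible of degree $d$, and the relevant degree is $\ou(x,gh)$. Using $\gcd(g,h)=1$ together with item (i) of Lemma~\ref{lem:bound}, one has $\ou(x,gh)=\mathrm{lcm}(\ou(x,g),\ou(x,h))=\mathrm{lcm}(e,n)$. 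Hence this family consists of $\frac{n(q^d-1)}{\mathrm{lcm}(n,e)}$ irreducible factors of degree $\mathrm{lcm}(n,e)$, which is precisely the claimed factorization.

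For the final claim, I would first record that the second count is always positive: since $e=\ou(x,g)$ divides $\Phi_q(g)=q^d-1$, we have $\mathrm{lcm}(n,e)\le ne\le n(q^d-1)$, so $\frac{n(q^d-1)}{\mathrm{lcm}(n,e)}\ge 1$ and a factor of degree $\mathrm{lcm}(n,e)$ always occurs. Comparing this with the unique factor of degree $n$, all irreducible factors share a common degree if and only if $\mathrm{lcm}(n,e)=n$, i.e. $e\mid n$. I would then translate this divisibility back to $g$: by definition $e=\ou(x,g)$ is the least $k>0$ with $g\mid x^k-1$, so $e\mid n$ is equivalent to $g\mid x^n-1$, and in that case $\mathrm{lcm}(n,e)=n$ forces every factor to have degree $n$. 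I do not expect a genuine obstacle here, as the statement is essentially a corollary of Theorem~\ref{thm:main}; the only point requiring care is the positivity argument guaranteeing that the degree-$\mathrm{lcm}(n,e)$ family is never empty, since this is exactly what makes the ``same degree'' dichotomy nontrivial.
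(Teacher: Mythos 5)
Your proposal is correct and follows essentially the same route as the paper: specialize Theorem~\ref{thm:main} with $g_1=g$, $g_2=1$, use Theorem~\ref{thm:add}(i) to get $\ou(x,h)=n$ and Lemma~\ref{lem:bound}(i) to get $\ou(x,gh)=\mathrm{lcm}(n,e)$, then read off the two families of factors indexed by $G\in\{1,g\}$. The only addition is your explicit check that the degree-$\mathrm{lcm}(n,e)$ family is nonempty, which the paper leaves implicit but which is a sensible point to record.
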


\begin{proof}
From item (i) of Theorem~\ref{thm:add}, $\ou(x, h)=n$. We observe that $\Phi_q(g)=q^d-1$ and, since $\gcd(g, h)=1$, item (i) of Lemma~\ref{lem:bound} yields $$\ou(x, gh)=\mathrm{lcm}(\ou(x, g), \ou(x, h))=\mathrm{lcm}(n, e).$$ From now, the degree distribution of irreducible factors of $f(L_g(x))$ follows from Theorem~\ref{thm:main}. Since the degrees of the irreducible factors of $f(L_g(x))$ are $n$ and $\mathrm{lcm}(n, e)$, the polynomial $f(L_g(x))$ factors into irreducible polynomials of the same degree $k$ if and only if $k=n$ and $\mathrm{lcm}(n, e)=n$, i.e., $e$ divides $n$. The latter is equivalent to $g$ divides $x^n-1$. 
\end{proof}

\begin{remark}\label{remark:EDF}
When a polynomial $F$ is known to factor as irreducible polynomials of the same degree over a finite field, we have an efficient probabilistic method that provides the complete factorization of $F$: in the algorithm proposed in~\cite{GS92}, if $F$ has degree $M$, the expected number of operations in $\F_q$ to obtain the factorization of $F$ over $\F_q$ is $O(M^{1.688}+M^{1+o(1)}\log q)$. 
\end{remark}

\begin{corollary}\label{cor:EDF}
If $f(x)=\sum_{i=0}^{n}a_x^i\in \F_q[x]$ is an irreducible polynomial of degree $n$ such that $a_{n-1}=0$ and $n$ is not divisible by the characteristic $p$ of $\F_q$, then $f(x^q-x)$ factors as $q$ irreducible polynomials of degree $n$.
\end{corollary}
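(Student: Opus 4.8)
The plan is to recognize $f(x^q-x)$ as an instance of the composition $f(L_g(x))$ to which Theorem~\ref{thm:irred} applies. First I would observe that $x^q-x=L_g(x)$ for $g=x-1$, since the $q$-associate of $x-1$ is exactly $x^q-x$. The polynomial $g=x-1$ is monic, irreducible of degree $d=1$, and satisfies $\gcd(g,x)=1$. To invoke Theorem~\ref{thm:irred} it remains only to verify the hypothesis $\gcd(g,h)=1$, where $h$ denotes the $\F_q$-order of any root $\alpha$ of $f$; this is precisely where the two assumptions $a_{n-1}=0$ and $\gcd(n,p)=1$ enter.

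For the coprimality, let $\alpha$ be a root of $f$, so that $\deg(\alpha)=n$ and $h=\mq$ divides $x^n-1$. Writing $H=\frac{x^n-1}{x-1}=1+x+\cdots+x^{n-1}$, I would compute $L_H(\alpha)=\sum_{i=0}^{n-1}\alpha^{q^i}=\Tr_{\F_{q^n}/\F_q}(\alpha)$, which equals $-a_{n-1}$ by the standard relation between the trace and the coefficient of $x^{n-1}$ in $f(x)=\prod_{i=0}^{n-1}(x-\alpha^{q^i})$. Hence $a_{n-1}=0$ forces $L_H(\alpha)=0$, i.e.\ $h$ divides $H$. Since $\gcd(n,p)=\gcd(n,q)=1$, the polynomial $x^n-1$ is squarefree, so $x-1$ divides it exactly once and $\gcd(x-1,H)=1$; combined with $h\mid H$ this yields $\gcd(g,h)=\gcd(x-1,h)=1$, as required.

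With $\gcd(g,h)=1$ established, Theorem~\ref{thm:irred} applies with $d=1$ and $e=\ou(x,x-1)$. Since $x\equiv 1\pmod{x-1}$, we have $\ou(x,x-1)=1$, so $\mathrm{lcm}(n,e)=n$ and the theorem produces one irreducible factor of degree $n$ together with $\frac{n(q-1)}{n}=q-1$ further irreducible factors of degree $n$. Thus $f(x^q-x)$ splits into $1+(q-1)=q$ irreducible polynomials of degree $n$, which is the claim. I expect the only delicate point to be the coprimality step: one must use the squarefreeness of $x^n-1$ (guaranteed by $\gcd(n,p)=1$) to pass from the divisibility $h\mid H$ to $\gcd(h,x-1)=1$, since without $\gcd(n,p)=1$ the factor $x-1$ could occur with higher multiplicity in $x^n-1$ and then divide $h$, breaking the hypothesis of Theorem~\ref{thm:irred}.
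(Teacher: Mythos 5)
Your proof is correct and follows essentially the same route as the paper: both establish $h\mid H=\frac{x^n-1}{x-1}$ from the vanishing trace, use squarefreeness of $x^n-1$ (via $\gcd(n,p)=1$) to conclude $\gcd(h,x-1)=1$, and then apply Theorem~\ref{thm:irred} with $g=x-1$. Your version is in fact slightly more careful, spelling out $e=\ou(x,x-1)=1$ and the resulting count $1+(q-1)=q$, and getting the sign of the trace ($L_H(\alpha)=-a_{n-1}$) right where the paper is sloppy.
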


\begin{proof}
Let $\alpha$ be any root of $f$ and let $h$ be the $\F_q$-order of $\alpha$, hence $f(x)=\prod_{i=0}^{n-1}(x-\alpha^{q^{i}})$ and so $L_H(\alpha)=\sum_{i=0}^{n-1}\alpha^{q^i}=a_{n-1}=0$. In particular, $H$ is divisible by $h$. Since $n$ is not divisible by $p$, $x^n-1=H(x-1)$ has only simple roots, hence $\gcd(H, x-1)=1$ and then $\gcd(h, x-1)=1$. From now, the result follows from Theorem~\ref{thm:irred} with $g=x-1$.

\end{proof}

If $g\ne x$ is an irreducible polynomial of degree $d$ and $\alpha\in \F_{q^d}$ is any root of $g$, we observe that $\ou(x, g)=\ord(\alpha)$: in fact, the polynomial $g$ divides $x^s-1$ if and only if $\alpha^s=1$. We have the bound $\ou(x, g)\le \Phi_q(g)=q^d-1$ and equality holds if and only if $\ord(\alpha)=q^d-1$, i.e., $\alpha$ is a generator of $\F_{q^d}^*$. In this case, $\alpha$ is a \emph{primitive} element of $\F_{q^d}$ and $g$ is commonly called a \emph{primitive polynomial}. From the previous theorem, we have the following corollary.

\begin{corollary}\label{cor:high-deg}
Let $f(x)\in \F_q[x]$ be an irreducible polynomial of degree $n$ and let $g\ne x, x-1$ be an irreducible polynomial of degree $d$ such that any of its roots has order $e=\ou(x, g)$ and $\gcd(n, e)=1$. The following hold:

\begin{enumerate}[(i)]
\item the polynomial $f(L_g(x))$ factors as one irreducible polynomial of degree $n$ and $\frac{(q^d-1)}{e}$ irreducible factors of degree $ne$;

\item if $g$ is a primitive polynomial, then $f(L_g(x))$ factors as one irreducible polynomial of degree $n$ and one irreducible polynomial of degree $n(q^d-1)$.
\end{enumerate}
\end{corollary}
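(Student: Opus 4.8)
The plan is to deduce everything from Theorem~\ref{thm:irred}. Let $h$ denote the $\F_q$-order of the roots of $f$; by item (i) of Theorem~\ref{thm:add} we have $\ou(x, h)=n$. Theorem~\ref{thm:irred} applies to $f(L_g(x))$ as soon as $\gcd(g, x)=\gcd(g, h)=1$, so the substance of the argument is to check these two coprimality conditions. Once they hold, both parts follow by substituting $\gcd(n, e)=1$ (and, for part (ii), the value of $e$ for a primitive $g$) into the factorization formula of Theorem~\ref{thm:irred}.

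First I would verify $\gcd(g, h)=1$, which is the one genuinely nontrivial point. Since $g$ is irreducible, it suffices to rule out $g\mid h$. Because every root of $f$ lies in $\F_{q^n}$, its $\F_q$-order $h$ divides $x^n-1$; hence $g\mid h$ would force $g\mid x^n-1$, that is, $e=\ou(x, g)$ divides $n$. Combined with the hypothesis $\gcd(n, e)=1$, this gives $e=1$, so $x\equiv 1\pmod g$ and therefore $g\mid x-1$. As $g$ is irreducible this means $g=x-1$, contradicting the standing assumption $g\ne x-1$. Thus $\gcd(g, h)=1$; the condition $\gcd(g, x)=1$ is automatic since $g$ is irreducible and $g\ne x$.

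With both coprimality conditions established, Theorem~\ref{thm:irred} shows that $f(L_g(x))$ factors as one irreducible polynomial of degree $n$ together with $\frac{n(q^d-1)}{\mathrm{lcm}(n, e)}$ irreducible polynomials of degree $\mathrm{lcm}(n, e)$. Since $\gcd(n, e)=1$ we have $\mathrm{lcm}(n, e)=ne$, and the count collapses to $\frac{n(q^d-1)}{ne}=\frac{q^d-1}{e}$, which is exactly part (i). For part (ii), recall (as noted just before the corollary) that a primitive polynomial $g$ of degree $d$ has a primitive root, so $e=\ou(x, g)=q^d-1$; substituting this into part (i) turns the number of factors of degree $ne=n(q^d-1)$ into $\frac{q^d-1}{q^d-1}=1$, giving the claimed single high-degree factor.

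The argument is, in the end, a verification of hypotheses followed by arithmetic simplification, so I do not expect a serious obstacle. The only step requiring care is the deduction $\gcd(g, h)=1$: here the interplay between $e\mid n$ and $\gcd(n, e)=1$ forces $e=1$ and thereby collides with the exclusion $g\ne x-1$, and it is precisely this exclusion that guarantees Theorem~\ref{thm:irred} is applicable.
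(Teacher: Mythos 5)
Your proposal is correct and follows essentially the same route as the paper: reduce to Theorem~\ref{thm:irred} by checking $\gcd(g,h)=1$, then simplify using $\mathrm{lcm}(n,e)=ne$. The paper leaves the verification that $g\nmid h$ as ``one can see,'' and your explicit argument (via $h\mid x^n-1$, hence $e\mid n$, hence $e=1$, hence $g=x-1$) is exactly the intended justification.
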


\begin{proof}
Since $g\ne x-1$ and $\gcd(n, e)=1$, one can see that $g$ does not divide $h$. In particular, we are in the conditions of Theorem~\ref{thm:irred} and, since $\mathrm{lcm}(n, e)=ne$, item (i) follows from Theorem~\ref{thm:irred}. Item (ii) follows directly from item (i) with $e=q^d-1$.
\end{proof}

\subsection{Construction of high degree irreducible polynomials} Observe that item (ii) of Corollary~\ref{cor:high-deg} suggests the construction of irreducible polynomials of high degree from primitive polynomials: for instance, if $f(x)$ is an irreducible polynomial of degree $n$ and $g$ is a primitive polynomial of degree $d$ such that $\gcd(n, q^d-1)=1$ with $q^d-1>1$, then $g\ne x-1$. In particular, $f(L_g(x))$ factors as one irreducible polynomial $G_1$ of degree $n$ and one irreducible polynomial $G_2$ of degree $n(q^d-1)>n$. We have $G_1=\gcd(f(L_g(x)), x^{q^n}-x)$ and so $G_2=\frac{f(L_g(x))}{G_1}$ is an irreducible polynomial of degree $n(q^d-1)$. 

\begin{remark}
Similar ideas in the construction of irreducible polynomials of degree $n(q^d-1)$ were previously employed in \cite{KK11}, but with a different approach.
\end{remark}

\noindent In the case $q=2$, the following proposition shows that we can iterate this construction.

\begin{proposition}
Let $f\in \F_2[x]$ be an irreducible polynomial of degree $n$, let $\{d_1, \ldots, d_k\}$ be a set of positive integers pairwise relatively prime such that $d_i\ge 2$ and $\gcd(n, 2^{d_i}-1)=1$ for any $i$. In addition, let $g_1, \ldots, g_k$ be primitive polynomials such that $\deg(g_i)=d_i$. Set $f_1=f$, $n_1=n$ and for $1\le j\le k$, let $$n_{j+1}=n(2^{d_1}-1)\cdots (2^{d_{j}}-1)\; \text{and}\;f_{j+1}(x)=\frac{f_{j}(L_{g_j}(x))}{\gcd(f_{j}(L_{g_j}(x)), x^{2^{n_j}}+x)}.$$
For each $1\le j\le k+1$, $f_j(x)$ is an irreducible polynomial of degree $n_j$.
\end{proposition}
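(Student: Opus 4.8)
The plan is to prove this by induction on $j$, showing that at each stage $f_j$ is irreducible of degree $n_j$ and that the hypotheses of Corollary~\ref{cor:high-deg}(ii) are satisfied when we pass from $f_j$ to $f_{j+1}$ via the composition $f_j(L_{g_j}(x))$. The base case $j=1$ is immediate since $f_1=f$ is irreducible of degree $n_1=n$ by hypothesis. The inductive step is where the real content lies: assuming $f_j$ is irreducible of degree $n_j$, I want to apply item (ii) of Corollary~\ref{cor:high-deg} with $f=f_j$ and $g=g_j$ to conclude that $f_j(L_{g_j}(x))$ factors as one irreducible polynomial of degree $n_j$ and one irreducible polynomial of degree $n_j(2^{d_j}-1)=n_{j+1}$.

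To invoke Corollary~\ref{cor:high-deg}(ii), I must verify its hypotheses for the pair $(f_j, g_j)$: namely that $g_j$ is a primitive polynomial (this holds by assumption, with $e=\ou(x,g_j)=2^{d_j}-1$), that $g_j\ne x, x-1$ (which follows from $\deg(g_j)=d_j\ge 2$), and crucially that $\gcd(n_j, e)=\gcd(n_j, 2^{d_j}-1)=1$. \textbf{The main obstacle} is precisely this coprimality condition, since $n_j=n(2^{d_1}-1)\cdots(2^{d_{j-1}}-1)$ carries accumulated factors from all previous stages. I will establish $\gcd(n_j, 2^{d_j}-1)=1$ by checking that $2^{d_j}-1$ is coprime to each factor of $n_j$ separately: it is coprime to $n$ by the hypothesis $\gcd(n, 2^{d_i}-1)=1$ for all $i$, and it is coprime to each $2^{d_i}-1$ with $i<j$ because the $d_i$ are pairwise relatively prime and one has the standard identity $\gcd(2^{d_i}-1, 2^{d_j}-1)=2^{\gcd(d_i,d_j)}-1=2^1-1=1$.

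Once the hypotheses are verified, Corollary~\ref{cor:high-deg}(ii) gives that $f_j(L_{g_j}(x))$ has exactly two irreducible factors, one of degree $n_j$ and one of degree $n_{j+1}$. The degree-$n_j$ factor is exactly the product of the minimal polynomials of those roots lying in $\F_{2^{n_j}}$, so it equals $\gcd(f_j(L_{g_j}(x)), x^{2^{n_j}}+x)$ since $x^{2^{n_j}}-x=x^{2^{n_j}}+x$ over $\F_2$ collects precisely the elements of $\F_{2^{n_j}}$. Therefore the quotient $f_{j+1}(x)=f_j(L_{g_j}(x))/\gcd(f_j(L_{g_j}(x)), x^{2^{n_j}}+x)$ is exactly the remaining irreducible factor, which has degree $n_{j+1}$. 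This closes the induction and shows $f_{j+1}$ is irreducible of degree $n_{j+1}$ for each $j$ in range, giving the statement for all $1\le j\le k+1$.

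Beyond the coprimality bookkeeping, the only point requiring mild care is confirming that the $\gcd$ extraction correctly isolates the small factor rather than accidentally capturing part of the large one; this is handled by observing that the large factor has degree $n_{j+1}>n_j$, so none of its roots lie in $\F_{2^{n_j}}$, and hence it contributes nothing to the gcd with $x^{2^{n_j}}+x$. I expect the write-up to be short, with the bulk of the exposition devoted to the coprimality argument and a sentence justifying the gcd characterization of the degree-$n_j$ factor.
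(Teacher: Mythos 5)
Your proposal is correct and follows essentially the same route as the paper: the paper's proof likewise reduces to the identity $\gcd(2^{d_i}-1,2^{d_j}-1)=2^{\gcd(d_i,d_j)}-1=1$ to get pairwise coprimality of the factors of $n_j$, and then applies the construction from item (ii) of Corollary~\ref{cor:high-deg} (including the $\gcd$ with $x^{2^{n_j}}+x$ to strip off the degree-$n_j$ factor) inductively to each pair $(f_j,g_j)$. Your write-up just makes explicit the bookkeeping that the paper leaves to the phrase ``the argument previously given.''
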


\begin{proof}
We observe that, from hypothesis, the numbers $d_i$ are pairwise relatively prime and so $\gcd(2^{d_i}-1, 2^{d_j}-1)=2^{\gcd(d_i, d_j)}-1=1$ for any $1\le i<j\le k$, i.e., the numbers $2^{d_i}-1$ are pairwise relatively prime. The fact that $f_j$ is irreducible follows after applying the argument previously given for the pair $(f, g)=(f_j, g_j)$.

\end{proof}

Of course, there is a wide variety of sets $\{d_1, \ldots, d_k\}$ that we can apply the previous corollary: for instance, one may pick $\{d_1, \ldots, d_k\}$ as a set of distinct primes and $n$ a power of two. 

\begin{example}
Consider $f_1(x)=x^4+x+1\in \F_2[x]$ an irreducible polynomial of degree $n=4$ and let $g_1=x^2+x+1$ and $g_2=x^3+x+1$ be primitive polynomials. We obtain $f_2(x)=x^{12} + x^9 + x^8 + x^6 + x^3 + x^2 + 1$ and $f_3(x)=x^{84} + x^{81} + x^{80} + x^{76} + x^{73} + x^{72} + x^{70} + x^{69} + x^{67} + x^{65} + x^{60} + x^{57} + x^{56} + x^{54} + x^{51} + x^{50} + x^{45} + x^{44} + x^{42} + x^{39} + x^{38} + x^{36} + x^{30} + x^{27} + x^{26} + x^{16} + x^{15} + x^{14} + x^{13} + x^{10} + x^8 + x^7 + x^6 + x^5 + 1$.
\end{example}

\section{The explicit factorization of $f(x^q-x)$}
From Corollary~\ref{cor:EDF}, we known that if $f$ is an irreducible polynomial of degree $n$ and trace zero, where $n$ is not divisible by the characteristic $p$ of $\F_q$, then $f(x^q-x)$ factors as $q$ irreducible polynomials of degree $n$. In this section, we provide an efficient method to obtain the explicit factorization of $f(x^q-x)$ under these conditions. We first observe that if $g(x)$ is an $n$ degree irreducible factor of $f(x^q-x)$, then for any $a\in \F_q$, $g(x+a)$ is an irreducible polynomial of degree $n$ and divides $f((x+a)^q-(x+a))=f(x^q-x)$. One may ask if $g(x+a)$ is distinct from $g(x)$. From Theorem 2.5 of~\cite{R18}, the following lemma is straightforward.

\begin{lemma}\label{lem:div-by-p}
Let $g\in \F_q[x]$ be a polynomial of degree at least $n$ and suppose that $a\in \F_q^*$ is such that $g(x+a)=g(x)$. Then $n$ is divisible by $p$. 
\end{lemma}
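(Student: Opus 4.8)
The plan is to interpret the hypothesis $g(x+a)=g(x)$ geometrically: it says that $g$ is invariant under the translation $\tau\colon\theta\mapsto\theta+a$ of $\overline{\F}_q$, and the whole point is that, in characteristic $p$, a nonzero translation has order exactly $p$. Concretely, since $\F_q$ has characteristic $p$ and $a\neq 0$, the elements $0,a,2a,\dots,(p-1)a$ are pairwise distinct while $pa=0$; hence the additive order of $a$ is $p$, and every orbit of $\tau$ on $\overline{\F}_q$ is a set $\{\theta,\theta+a,\dots,\theta+(p-1)a\}$ of exactly $p$ elements.

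First I would check that the multiset of roots of $g$ in $\overline{\F}_q$ breaks up into full $\tau$-orbits. If $\theta$ is a root, then evaluating $g(x+a)=g(x)$ at $x=\theta-a$ gives $g(\theta-a)=g(\theta)=0$, so the set of roots is stable under $\tau$ and $\tau^{-1}$. More is true at the level of multiplicities: from $g(x)=g(x+a)$ one gets $(x-\theta)^m\mid g(x)$ if and only if $(x-(\theta+a))^m\mid g(x)$, so the multiplicities of $\theta$ and of $\theta+a$ as roots of $g$ agree. Consequently all the roots lying in a common $\tau$-orbit occur with one and the same multiplicity.

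Combining these two observations, $\deg(g)$ is a sum, taken over the distinct $\tau$-orbits of roots, of $p$ times the common multiplicity attached to each orbit; in particular $p$ divides $\deg(g)$. Since in the intended application $g$ is an irreducible factor of $f(x^q-x)$ of degree $n$, taking $n=\deg(g)$ yields $p\mid n$, as desired.

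I do not expect a serious obstacle here: the argument is elementary and the only step requiring genuine care is the multiplicity-preservation claim, which is exactly what upgrades ``the roots form a $\tau$-stable set'' to ``the roots come in blocks of size $p$ of equal multiplicity,'' and hence what forces $p\mid\deg(g)$. An equivalent and arguably cleaner route---presumably the content of the cited Theorem~2.5 of \cite{R18}---is to introduce the additive polynomial $\phi(x)=x^p-a^{p-1}x=\prod_{c\in\F_p a}(x-c)$, to note that $\phi(x+a)=\phi(x)$ and that $\phi$ generates the ring of translation-invariant polynomials over $\F_q$, and then to write $g=G(\phi)$ for some $G\in\F_q[x]$, from which $\deg(g)=p\deg(G)$ is immediate.
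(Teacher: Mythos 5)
Your argument is correct and complete. The key points all check out: translation by $a\neq 0$ has additive order exactly $p$ in characteristic $p$, so the $\tau$-orbits in $\overline{\F}_q$ all have size $p$; the substitution $x\mapsto x\pm a$ in $g(x+a)=g(x)$ shows that root multiplicities are constant along each orbit; and summing $p$ times the common multiplicity over the orbits gives $p\mid\deg(g)$, which is the intended conclusion (the paper's phrasing ``degree at least $n$'' is evidently a slip for ``degree $n$'', as the application to an irreducible factor of $f(x^q-x)$ of degree $n$ confirms). It is worth noting that the paper does not actually prove this lemma at all: it simply declares it ``straightforward'' from Theorem~2.5 of the cited reference \cite{R18}, which concerns the action of $\GL_2(\F_q)$ on irreducible polynomials. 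So your proof is necessarily a different route, and a welcome one --- it is elementary, self-contained, and makes the mechanism (orbits of size $p$ with equal multiplicities) explicit rather than importing a more general result about M\"obius-type actions. Your closing alternative via the additive polynomial $\phi(x)=x^p-a^{p-1}x$ and the identity $g=G(\phi)$ is also a valid and clean packaging of the same idea, though the claim that $\phi$ generates the full ring of translation-invariant polynomials would itself need a short justification (e.g.\ induction on degree using division by $\phi$, or Artin--Schreier theory); since your primary orbit-counting argument stands on its own, nothing in the proof depends on that remark.
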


From the previous lemma, we obtain the following result.

\begin{corollary}\label{cor:factors}
Let $f\in \F_q[x]$ be a monic polynomial of degree $n$ such that $n$ is not divisible by $p$ and suppose that $f(x^q-x)$ factors as $q$ monic irreducible polynomials of degree $n$ over $\F_q$. If $g(x)$ is one of these irreducible factors, then $f(x)$ factors as $\prod_{a\in \F_q}g(x+a)$.
\end{corollary}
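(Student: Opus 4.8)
The plan is to show that the $q$ monic irreducible factors of $f(x^q-x)$ are precisely the translates $g(x+a)$ with $a\in\F_q$, so that the identity to establish is $f(x^q-x)=\prod_{a\in\F_q}g(x+a)$ (matching the degree $qn$ on both sides). The starting point is the observation recorded just before the corollary: for every $a\in\F_q$ the polynomial $g(x+a)$ is again monic irreducible of degree $n$, and since $(x+a)^q-(x+a)=x^q-x$ it divides $f((x+a)^q-(x+a))=f(x^q-x)$. Thus each translate $g(x+a)$ is one of the irreducible factors of $f(x^q-x)$, and it remains only to check that these $q$ translates are pairwise distinct and then to match degrees.

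The crux of the argument, and the step where the hypothesis $p\nmid n$ is used, is distinctness. First I would reduce the general comparison to a shift fixing $g$: if $g(x+a)=g(x+b)$ for some $a\neq b$ in $\F_q$, then substituting $x\mapsto x-b$ yields $g(x+c)=g(x)$ with $c=a-b\in\F_q^*$. Now Lemma~\ref{lem:div-by-p} applies directly, since $g$ has degree $n\geq 1$: a nonzero shift $c$ that fixes a polynomial of degree at least $n$ forces $p$ to divide $n$. As $p\nmid n$ by hypothesis, no such coincidence can occur, and therefore the polynomials $g(x+a)$, $a\in\F_q$, are $q$ distinct monic irreducible divisors of $f(x^q-x)$.

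Finally I would close with a degree count. The $q$ translates being distinct monic irreducibles dividing $f(x^q-x)$, their product $\prod_{a\in\F_q}g(x+a)$ also divides $f(x^q-x)$, and this product is monic of degree $qn$. On the other hand $\deg f(x^q-x)=qn$ because $\deg f=n$ and $\deg(x^q-x)=q$, and $f(x^q-x)$ is monic since $f$ is. A monic divisor of a monic polynomial of equal degree equals that polynomial, so $f(x^q-x)=\prod_{a\in\F_q}g(x+a)$, which is the asserted factorization. Note that this route also re-derives, rather than merely uses, the fact that there are exactly $q$ irreducible factors.

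I expect the only genuinely delicate point to be the distinctness step, i.e. guaranteeing that translation by $\F_q$ acts freely on the set of irreducible factors of $f(x^q-x)$; everything else is bookkeeping with degrees and leading coefficients. The role of the assumption $p\nmid n$ is exactly to exclude the self-coincidence $g(x+c)=g(x)$ through Lemma~\ref{lem:div-by-p}: without it the translates could repeat and the product would collapse to a proper divisor of $f(x^q-x)$, so that the explicit factorization would fail.
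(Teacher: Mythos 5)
Your proposal is correct and follows essentially the same route as the paper: each translate $g(x+a)$ divides $f(x^q-x)$, and distinctness of the $q$ translates is obtained by reducing $g(x+a)=g(x+b)$ to $g_0(x+a_0)=g_0(x)$ with $a_0=a-b\neq 0$ and invoking Lemma~\ref{lem:div-by-p} together with $p\nmid n$. The only (immaterial) difference is that you close with a degree count on $\prod_{a\in\F_q}g(x+a)$, whereas the paper simply matches the $q$ distinct monic translates against the $q$ irreducible factors assumed in the hypothesis.
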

\begin{proof}
From the previous observations, for any $a\in \F_q$, $g(x+a)$ is a monic irreducible polynomial of degree $n$ that divides $f(x^q-x)$. Since there are exactly $q$ polynomial $g(x+a), a\in \F_q$ and they are all monic, it is sufficient to prove that they are all different. For this, if $g(x+a)=g(x+b)$ with $a\ne b$ elements of $\F_q$, then $g_0(x+a_0)=g_0(x)$, where $g_0(x)=g(x+b)$ is a polynomial of degree $n$ and $a_0=a-b\ne 0$. From Lemma~\ref{lem:div-by-p}, $n$ is divisible by $p$, which contradicts our hypothesis.
\end{proof}

In particular, we have shown that the knowledge of just one irreducible factor $g(x)$ of $f(x^q-x)$ is sufficient to obtain the complete factorization of such polynomial: the irreducible factors are $g(x+a)$ with $a\in \F_q$.  In the following theorem, we show how to obtain one of these irreducible factors and hence obtain the explicit factorization of $f(x^q-x)$.

\begin{theorem}\label{thm:explicit-factor}
Let $f=x^n+\sum_{i=}^{n-1}a_ix^i\in \F_q[x]$ be an irreducible polynomial of degree $n$ such that $a_{n-1}=0$ and $n$ is not divisible by the characteristic $p$ of $\F_q$. Let $\alpha$ be any root of $f$. For $$\beta=-\frac{1}{n}\sum_{i=1}^{n-1}i\alpha^{q^{n-1-i}}=-\frac{1}{n}(\alpha^{q^{n-2}}+2\alpha^{q^{n-3}}+\cdots+(n-2)\alpha^q+(n-1)\alpha),$$ the following hold.

\begin{enumerate}[(i)]
\item $\beta$ is a root of $f(x^q-x)$ and, in particular, the minimal polynomial $g(x)$ of $\beta$ has degree $n$ and satisfies $g(x)=\prod_{i=0}^{n-1}(x-\beta^{q^i})$,

\item $g(x)$ is an irreducible factor of $f(x^q-x)$ and 
$$f(x^q-x)=\prod_{a\in \F_q}g(x+a),$$
is the complete factorization of $f(x^q-x)$ over $\F_q$.
\end{enumerate}
\end{theorem}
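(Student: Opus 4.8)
The plan is to verify directly that $\beta^q - \beta = \alpha$, after which both parts follow quickly from the earlier corollaries. First I would compute $\beta^q$. Since $q$ is a power of the characteristic $p$, the map $x \mapsto x^q$ is additive, and the coefficients $-1/n$ and the integers $i$ all lie in the prime field $\F_p \subseteq \F_q$, hence are fixed by it. Raising $\beta$ to the $q$-th power therefore just shifts each exponent $q^{n-1-i}$ to $q^{n-i}$, and after reindexing, $\beta^q - \beta$ becomes a telescoping sum in the conjugates $\alpha, \alpha^q, \ldots, \alpha^{q^{n-1}}$. Writing $\beta = -\tfrac1n\sum_{j=0}^{n-2}(n-1-j)\alpha^{q^j}$ and $\beta^q = -\tfrac1n\sum_{k=1}^{n-1}(n-k)\alpha^{q^k}$, the consecutive coefficients differ by $1$ in the overlapping range, so the interior terms collapse to $\sum_{k=1}^{n-1}\alpha^{q^k}$, leaving a single boundary term $-(n-1)\alpha$.

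The crucial simplification comes from the hypothesis $a_{n-1}=0$. Writing $f=\prod_{i=0}^{n-1}(x-\alpha^{q^i})$, the coefficient of $x^{n-1}$ equals $-\sum_{i=0}^{n-1}\alpha^{q^i}$, so $a_{n-1}=0$ is equivalent to $\sum_{i=0}^{n-1}\alpha^{q^i}=0$, i.e. $\sum_{k=1}^{n-1}\alpha^{q^k}=-\alpha$. Substituting this into the telescoped expression, and using $p \nmid n$ so that $1/n$ makes sense in $\F_q$, I expect everything to collapse: $\beta^q-\beta = -\tfrac{1}{n}\bigl(\sum_{k=1}^{n-1}\alpha^{q^k}-(n-1)\alpha\bigr) = -\tfrac{1}{n}\bigl(-\alpha-(n-1)\alpha\bigr) = \alpha$. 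Consequently $f(\beta^q - \beta) = f(\alpha) = 0$, which proves that $\beta$ is a root of $f(x^q-x)$.

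Next I would determine the degree of $\beta$. On one hand, $\beta$ is an $\F_q$-polynomial expression in $\alpha$, so $\beta \in \F_q(\alpha) = \fqn$ and $\deg(\beta)$ divides $n$. On the other hand, $\alpha = \beta^q - \beta \in \F_q(\beta)$, so $n = \deg(\alpha) \le \deg(\beta)$; combining the two forces $\deg(\beta) = n$. Hence the minimal polynomial $g$ of $\beta$ is irreducible of degree $n$ with $g(x) = \prod_{i=0}^{n-1}(x - \beta^{q^i})$, giving part (i); and since $\beta$ is a root of $f(x^q-x)$, the polynomial $g$ divides $f(x^q-x)$ and is thus one of its irreducible factors of degree $n$.

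Finally, for part (ii), Corollary~\ref{cor:EDF} guarantees (under $a_{n-1}=0$ and $p\nmid n$) that $f(x^q-x)$ factors into exactly $q$ monic irreducible polynomials of degree $n$, of which $g$ is one; Corollary~\ref{cor:factors} then yields the complete factorization $f(x^q-x) = \prod_{a \in \F_q} g(x+a)$. The only genuinely non-routine step is the first one, the explicit evaluation of $\beta^q-\beta$: the precise choice of the weights $i$ in the definition of $\beta$ is engineered exactly so that the telescoping combined with the trace-zero relation returns $\alpha$ on the nose, and care must be taken to track the boundary terms of the two shifted sums correctly.
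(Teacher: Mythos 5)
Your proposal is correct and follows essentially the same route as the paper: verify the telescoping identity $\beta^q-\beta=\alpha$ using the trace-zero hypothesis, then invoke Corollary~\ref{cor:EDF} and Corollary~\ref{cor:factors} to conclude. Your extra direct argument that $\deg(\beta)=n$ (from $\beta\in\F_q(\alpha)$ and $\alpha=\beta^q-\beta\in\F_q(\beta)$) is a small, correct addition that the paper instead absorbs into the appeal to Corollary~\ref{cor:EDF}.
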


\begin{proof}
Under our hypothesis, Corollary~\ref{cor:EDF} ensures that $f(x^q-x)$ factors as $q$ irreducible polynomials of degree $n$. In particular, we are under the conditions of Corollary~\ref{cor:factors} and so it suffices to prove that $\beta$ is a root of $f(x^q-x)$. We observe that
\begin{align*}\beta^q-\beta=-\frac{1}{n}\sum_{i=1}^{n-1}(i\alpha^{q^{n-i}}-i\alpha^{q^{n-1-i}})=-\frac{1}{n}(\alpha^{q^{n-1}}+\ldots+\alpha^q-(n-1)\alpha)\\
=-\frac{1}{n}(\alpha^{q^{n-1}}+\cdots+\alpha^q+\alpha)+\alpha=a_{n-1}+\alpha=\alpha,\end{align*}
since $0=a_{n-1}=\sum_{i=0}^{n-1}\alpha^{q^i}$. In particular, $\beta^q-\beta$ is a root of $f(x)$ and so $\beta$ is a root of $f(x^q-x)$. 

\end{proof}

\begin{remark}
Using an algorithm of Shoup (see \cite{S94}, Theorem 3.4), the minimal polynomial of $\beta$ can be obtained with $O(n^{1.688})$ operations in $\F_q$. Since $f(x^q-x)$ has degree $qn$ and factors as polynomials of the same degree, one may compare our method with the probabilistic approach of von zur Gathen and Shoup~\cite{GS92}, which gives the same factorization with $O((qn)^{1.688}+(qn)^{1+o(1)}\log q)$ operations in $\F_q$ (see Remark~\ref{remark:EDF}). In this comparison, our method is fairly better when $q$ is small, and is far more efficient if $q$ is large.
\end{remark}

\begin{example}
Let $f(x)=x^4+x-1\in \F_5[x]$ be an irreducible polynomial. In the notation of Theorem~\ref{thm:explicit-factor}, $n=4$ and $p=5$: we obtain $g(x)=x^4-x^2-x-2$ and so
$$f(x^5-x)=(x^5-x)^4+(x^5-x)-1=\prod_{i=0}^{4}[(x+i)^4-(x+i)^2-(x+i)-2],$$
or
\begin{align*}x^{20}+x^{16}+x^{12}+x^{8}+x^5+x^4 -x- 1=(x^4-x^2-x-2)\cdot (x^4+x^3+2x-1)\cdot \\
(x^4 + 2x^3 -2x^2 + x + 2)\cdot (x^4 -2x^3 -2x^2 + 2x -2)\cdot (x^4-x^3+x+2).
\end{align*}
\end{example}

In the case when $f$ has degree $2$ or has degree $3$ and $q$ is even, the minimal polynomial of $\beta$ in Theorem~\ref{thm:explicit-factor} can be explicitly computed from the coefficients of $f$ and, in particular, we obtain the factorization of special classes of polynomials over finite fields.

\begin{corollary}
Let $q$ be a power of a prime $p$. The following hold.
\begin{enumerate}[(i)]
\item if $p\ne 2$ and $a\in \F_q^*$ is a nonsquare, $f(x)=x^2-a$ is irreducible and 
$$f(x^q-x)=x^{2q}-2x^{q+1}+x^2-a=\prod_{c\in\F_q}\left(x^2+2cx+c^2-\frac{a}{4}\right).$$

\item if $p=2$ and $f(x)=x^3+ax+b$ is irreducible over $\F_q$, then 

\begin{align*}f(x^q-x)=f(x^q+x)=x^{3q}+x^{2q+1}+x^{q+2}+ax^{q}+x^3+ax+b=\\
\prod_{c\in \F_q}f(x+c)=\prod_{c\in \F_q}(x^3+cx^2+(c^2+a)x+c^3+ac+b).
\end{align*}
\end{enumerate}
\end{corollary}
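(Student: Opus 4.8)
The plan is to recognize both parts as direct specializations of Theorem~\ref{thm:explicit-factor}, the only genuine work being the explicit evaluation of the auxiliary root $\beta$ and its minimal polynomial $g(x)$ in each case. First I would verify the hypotheses of Theorem~\ref{thm:explicit-factor}. In part (i), $f(x)=x^2-a$ is irreducible precisely because $a$ is a nonsquare (a degree-two polynomial over $\F_q$ is reducible iff it has a root, i.e.\ iff $a$ is a square); its degree is $n=2$, the coefficient of $x^{n-1}=x$ vanishes, and $n=2$ is coprime to $p$ since $p\ne 2$. In part (ii), $f(x)=x^3+ax+b$ is irreducible by assumption, has degree $n=3$ with vanishing coefficient of $x^2$, and $n=3$ is coprime to $p=2$. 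Thus in both cases Theorem~\ref{thm:explicit-factor} applies and yields $f(x^q-x)=\prod_{c\in\F_q}g(x+c)$, where $g$ is the minimal polynomial of the explicit element $\beta$ given there.

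For part (i), I would evaluate $\beta=-\frac{1}{n}\sum_{i=1}^{n-1}i\alpha^{q^{n-1-i}}$ with $n=2$, obtaining $\beta=-\frac{1}{2}\alpha$. Since $f=x^2-a$ has roots $\pm\alpha$, irreducibility forces the conjugate to be $\alpha^q=-\alpha$, so $\beta^q=\frac{1}{2}\alpha=-\beta$. Hence $g(x)=(x-\beta)(x-\beta^q)=x^2-\beta^2=x^2-\frac{a}{4}$, using $\alpha^2=a$. Substituting into $\prod_{c\in\F_q}g(x+c)$ and expanding $(x+c)^2-\frac{a}{4}=x^2+2cx+c^2-\frac{a}{4}$ gives the claimed product.

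For part (ii), the crucial point is the arithmetic in characteristic two. Here $f(x^q-x)=f(x^q+x)$ because $-x=x$. Evaluating $\beta$ with $n=3$, the summand $2\alpha$ vanishes (as $2=0$) and the normalizing factor $-\frac{1}{3}$ equals $1$ (since $3=1$ and $-1=1$ in $\F_q$), so $\beta=\alpha^q$. Then $\{\beta,\beta^q,\beta^{q^2}\}=\{\alpha^q,\alpha^{q^2},\alpha\}$ is exactly the set of conjugates of $\alpha$, whence $g=f$ and the factorization reads $f(x^q-x)=\prod_{c\in\F_q}f(x+c)$. Expanding $f(x+c)=(x+c)^3+a(x+c)+b$ in characteristic two (where $3=1$) yields $x^3+cx^2+(c^2+a)x+(c^3+ac+b)$, matching the statement.

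I expect the only subtlety to lie in the characteristic-two coefficient bookkeeping of part (ii)—confirming that the normalizing factor $-1/n=-1/3$ collapses to $1$ and that the $2\alpha$ term drops out—together with the identification $\alpha^q=-\alpha$ of the conjugate in part (i). Everything else reduces to the routine expansion of the product over $\F_q$ supplied by Theorem~\ref{thm:explicit-factor}.
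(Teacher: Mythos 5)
Your proposal is correct and follows essentially the same route as the paper: verify the hypotheses of Theorem~\ref{thm:explicit-factor}, compute $\beta=-\alpha/2$ (hence $g(x)=x^2-a/4$) in case (i) and $\beta=\alpha^q+2\alpha=\alpha^q$ (hence $g=f$) in case (ii), then expand the product $\prod_{c\in\F_q}g(x+c)$. The paper's proof is merely terser; your added justifications (irreducibility of $x^2-a$, the identification $\alpha^q=-\alpha$ from the zero trace, and the characteristic-two bookkeeping) are all correct.
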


\begin{proof}
We observe that, from the hypothesis in items (i) and (ii), we are under the conditions of Theorem~\ref{thm:explicit-factor} and so only the computation of the polynomial $g(x)$ is needed. 
\begin{enumerate}[(i)]
\item In this case, we have $p$ odd and $n=2$. Let $\alpha$ be a root of $f(x)=x^2-a$, hence $\alpha^2=a$. From Theorem~\ref{thm:explicit-factor}, we obtain $\beta=-\alpha/2$, $g(x)=x^2-a/4$ and the result follows.

\item In this case, $n=3$ is such that $n$ is not divisible by $p$. Let $\alpha$ be a root of $f(x)=x^3+ax+b$. From Theorem~\ref{thm:explicit-factor}, we obtain $\beta=\alpha^{q}+2\alpha=\alpha^{q}$ and so $g(x)=f(x)$ and the result follows.

\end{enumerate}

\end{proof}

\section{Conclusions}
In this paper, we provide the degree distribution of the irreducible factors of $f(L(x))$ over $\F_q$, where $f(x)\in \F_q[x]$ is irreducible and $L(x)\in \F_q[x]$ is a linearized polynomial. We further present applications of this result, including lower bounds for the number of irreducible factors of $f(L(x))$, conditions for $f(L(x))$ to be irreducible and construction of high degree irreducible polynomials. We also provide an efficient method to obtain the complete factorization of the composition $f(x^q-x)$, where $f(x)\in \F_q[x]$ is an irreducible polynomial of degree $n$ with trace zero and $\gcd(n, q)=1$.

\begin{acknowledgements}
This work was conducted during a visit to Carleton University, supported by the program CAPES-PDSE 
(process - 88881.134747/2016-01).
\end{acknowledgements}

\end{document}